\newtheorem{theorem}{Theorem}[section]
\newtheorem{definition}[theorem]{Definition}
\newtheorem{corollary}[theorem]{Corollary}
\newtheorem{lemma}[theorem]{Lemma}
\theoremstyle{definition}
\newtheorem{example}[theorem]{Example}
\newtheorem{remark}[theorem]{Remark}
\newtheorem{questions}[theorem]{Questions}
\def\a{\alpha}
\def\be{\beta}
\def\z{\zeta}
\def\d{\delta}
\def\D{\Delta}
\def\e{\epsilon}
\def\E{\varepsilon}
\def\la{\lambda}
\def\g{\gamma}
\def\t{\tau}
\def\O{\Omega}
\def\o{\omega}
\def\s{\sigma}
\def\S{\Sigma}
\def\th{\theta}
\def\x{\{x_n\}}
\def\f{\frac{\| x_n-y\|}{1-\| x\|}}
\def\fr{\frac{\psi (t)}{\psi (\d t)}}
\def\|{\Vert}
\title[ MAPPINGS OF ASYMPTOTICALLY NONEXPANSIVE TYPE]
{ FIXED POINTS FOR MAPPINGS OF ASYMPTOTICALLY NONEXPANSIVE TYPE }
\author[T. Dom\'inguez Benavides; P. Lorenzo]{}
\author[T. Dom\'{\i}nguez ]{T. Dom\'{\i}nguez Benavides}
\address[T. Dom\'{\i}nguez Benavides]{Departamento de  An\'{a}lisis Matem\'{a}tico, Universidad de
Sevilla, Tarfia s/n, 41012-Sevilla, Spain}
\email{\tt tomasd@us.es}
\author[P. Lorenzo ]{P. Lorenzo Ram\'{\i}rez}
 \address[P. Lorenzo Ram\'{\i}rez] {Departamento de  An\'{a}lisis Matem\'{a}tico, Universidad de
Sevilla, Tarfia s/n, 41012-Sevilla, Spain}
 \email{\tt ploren@us.es}
\keywords{Fixed point, pointwise nonexpansive mapping, nearly uniform convexity, asymptotic radius.}
\thanks{The   authors are  supported by MICIU,
Grant PGC2018-098474-B-C2-1 and Andalusian Regional Government Grant
FQM-127.}
\begin{document}

\begin{abstract}
We prove the existence of fixed points for mappings which satisfy some asymptotic  nonexpansive conditions  in Banach spaces which are either nearly uniformly convex or they satisfy that asymptotic centers of bounded sequence are compact. Nominally, we consider pointwise eventually nonexpansive mappings, pointwise asymptotically nonexpansive mappings and asymptotically type nonexpansive mappings. We do not assume the existence of a continuous iterated, solving some long-standing open questions about existence of a fixed point for these mappings in absence of continuity \cite{KX}.

\end{abstract}

\maketitle

\setcounter{page}{1}

\def\N{{\Bbb N}}
\def\R{{\Bbb R}}
\def\acc#1{\if i#1\accent"13 \char "10 %
    \else \if j#1\accent"13 \char"11 %
                \else \accent"13 #1\fi\fi }%

\font\tit=cmr12 at 14pt
\def\s{\vskip 0.3cm}
\font\texto=cmr12.tfm
\font\peq=cmr12 at 9pt

\def\b{\bigskip}
\def\m{\medskip}

\def\a{\alpha}
\def\be{\beta}
\def\z{\zeta}
\def\d{\delta}
\def\D{\Delta}
\def\e{\epsilon}
\def\E{\varepsilon}
\def\la{\lambda}
\def\g{\gamma}
\def\t{\tau}
\def\O{\Omega}
\def\o{\omega}
\def\s{\sigma}
\def\S{\Sigma}
\def\th{\theta}
\def\x{\{x_n\}}
\def\dis{\displaystyle}
\def\l{\liminf_{n\to \infty}\frac{\| x_n-y\|}{1-\| x\|}}
\def\lb{\liminf_n\| x_n-x\| \leq \be}
\def\f{\frac{\| x_n-y\|}{1-\| x\|}}
\def\fr{\frac{\psi (t)}{\psi (\d t)}}
\def\V{\Vert}
\def\ma{\mathit{|||}}

\vskip1cm


\section{Nonexpansive and eventually nonexpansive mappings}

Let $(X,\Vert\cdot \Vert)$ be a Banach space and $C$ a nonempty subset of $X$. A mapping $T:C\to C$ is said to be \emph{nonexpansive} if for each $x,y\in C$, $\Vert Tx-Ty\Vert\leq \Vert x-y\Vert$. The Banach space $X$ satisfies \emph{the fixed point property for nonexpansive mappings} (in short, FPP) if every nonexpansive mapping defined on a nonempty weakly compact convex subset $C$ of $X$ into $C$ has a fixed point. One of the central goals in metric fixed point theory  is to characterize those Banach spaces  which have the  FPP.  In the last 60 years a large number of papers have appeared finding out some geometrical properties that imply the FPP (for instance, see the monographs \cite{GK2, KS}  and the references therein).


A natural relax in the nonexpansive assumption is to assume that the mapping $T$ is eventually nonexpansive, i.e.

\begin{definition}
	Let $X$ be a Banach  space and $C$ a nonempty subset of $X$. A mapping $T:C\to C$ is said to be eventually nonexpansive if there exists $N\in \N$ such that for every $n\geq N$
	$$
	\Vert T^nx-T^ny\Vert\leq \Vert x-y\Vert, \text{ for every }x, y \in X.
	$$
\end{definition}

It should be noted that an eventually nonexpansive mapping does not need to be nonexpansive, nor even continuous.
\begin{example}\label{e1}
	Let $C=[0,1]$ and $T:[0,1]\to [0,1]$ defined by $T(x)=0$ if $x< 1$ and $T(1)=1/2$. It is clear that $T$ is discontinuous at $x=1$ but $T^n\equiv 0$ for every  $n\geq 2$.
\end{example}

Looking at this example, we could guess that the fixed point theory for eventually nonexpansive mappings should be quite different of the corresponding theory for nonexpansive mapping. However, noting  that $T^n$ and $T^{n+1}$ are two commuting mappings, the equivalence between both theories is a direct consequence of  the following result:

\begin{theorem}[\cite{B2}]
	Let $X$ be a Banach space which satisfies the FPP, $C$ a weakly compact convex subset of $X$ and $\{T_i:i\in I\}$ an arbitrary family of commuting self-nonexpansive mappings of $C$. Then, the common fixed point set for this family is a nonempty nonexpansive retract of $C$.
\end{theorem}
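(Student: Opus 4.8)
The plan is to reduce the assertion to the case of a single map and then iterate over the family by transfinite induction, using commutativity to preserve invariance at every stage. First I would prove that if $T\colon C\to C$ is nonexpansive, then $F(T)$ is a nonempty nonexpansive retract of $C$. Nonemptiness is immediate, since $C$ is weakly compact and convex and $X$ has the FPP. The substance is the retraction, which is Bruck's structure theorem for fixed-point sets, and this is the step I would quote. Its difficulty is twofold: $F(T)$ need not be convex, so there is no metric projection available; and nonexpansive maps are not weakly continuous, so the natural candidate retractions --- for instance the maps $\phi_t$, where $\phi_t(u)$ is the unique fixed point of $y\mapsto(1-t)u+tTy$ for $t\uparrow 1$, which are nonexpansive, fix $F(T)$ pointwise, and satisfy $\sup_u\|\phi_t(u)-T\phi_t(u)\|\to 0$ --- have weak cluster points that are not obviously in $F(T)$. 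Bruck obtains the retraction instead by a transfinite construction carried out entirely in the norm topology, requiring only the conditional fixed point property, which the FPP of $X$ supplies on every closed convex subset of the weakly compact set $C$.

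Granting the single-map case, record the elementary observation that if $K$ is a nonexpansive retract of $C$ via $\rho\colon C\to K$ and $T\colon K\to K$ is nonexpansive, then $\widetilde T:=T\circ\rho\colon C\to C$ is nonexpansive with $F(\widetilde T)=F(T)$ (if $\widetilde Tx=x$ then $x=T\rho x\in K$, hence $\rho x=x$ and $Tx=x$; the reverse inclusion is trivial); thus $F(T)$ is nonempty by the FPP of $X$ and, by the single-map case, a nonexpansive retract of $C$. Now well-order the family as $\{T_{i_\xi}:\xi<\kappa\}$ and construct a decreasing chain of nonempty nonexpansive retracts $D_\xi$ of $C$, each invariant under every $T_i$: put $D_0:=C$; at a successor, $D_{\xi+1}:=F(T_{i_\xi}|_{D_\xi})$, which is legitimate since $D_\xi$ is $T_{i_\xi}$-invariant, is a nonempty nonexpansive retract of $C$ by the observation above, and is invariant under every $T_j$ because the family commutes ($x\in F(T_{i_\xi})$ gives $T_{i_\xi}(T_jx)=T_j(T_{i_\xi}x)=T_jx\in D_\xi$); at a limit $\lambda$, $D_\lambda:=\bigcap_{\xi<\lambda}D_\xi$, which is nonempty because the $D_\xi$ are nested nonempty closed (retracts are closed) subsets of the weakly compact set $C$. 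An easy induction gives $\bigcap_{i\in I}F(T_i)\subseteq D_\xi$ for all $\xi$, while $D_\kappa\subseteq F(T_{i_\xi})$ for all $\xi$; hence $D_\kappa=\bigcap_{i\in I}F(T_i)$, which is then a nonexpansive retract of $C$.

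For a finite family the chain has no limit stages and everything above is elementary once the single-map case is in hand. The main obstacle --- already present in the single-map case, and reappearing at the limit ordinals for infinite families --- is to show that the intersection $D_\lambda=\bigcap_{\xi<\lambda}D_\xi$ of a decreasing chain of nonexpansive retracts $R_\xi\colon C\to D_\xi$ is again a nonexpansive retract of $C$. An ultrafilter limit $R_\lambda$ of the $R_\xi$ is nonexpansive and fixes $D_\lambda$ pointwise, but a priori only places $R_\lambda x$ in the weak closure of each $D_\xi$, and $D_\xi$ (norm-closed, yet possibly non-convex) need not be weakly closed. Producing a retraction that genuinely lands in $D_\lambda$ is exactly where Bruck's coherent, norm-topological construction of the retractions is needed, and that is the ingredient I would invoke rather than reprove.
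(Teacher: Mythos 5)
The paper does not prove this statement at all: it is quoted verbatim from Bruck's paper \cite{B2}, and the citation is the proof. Your outline does reconstruct the standard route to Bruck's theorem --- the single-map structure theorem (fixed-point sets of nonexpansive maps are nonexpansive retracts, available because hereditary FPP on the weakly compact convex $C$ gives the conditional fixed point property), the composition trick $T\circ\rho$ to stay inside the class of self-maps of $C$, commutativity for invariance at successor stages, and a chain argument at limits. Since the two ingredients you invoke rather than prove (the single-map retract theorem and the fact that a decreasing chain of nonexpansive retracts of $C$ has a nonexpansive retract of $C$ as its intersection) are exactly Bruck's lemmas, your proposal is in effect a reduction of Bruck's theorem to Bruck's theorem; that is acceptable in the context of this paper, which itself only cites \cite{B2}, but it should be said plainly that the substance is being imported, not reproved.

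There is, however, one genuine error in a step you present as elementary: the claim that $D_\lambda=\bigcap_{\xi<\lambda}D_\xi$ is nonempty ``because the $D_\xi$ are nested nonempty closed subsets of the weakly compact set $C$.'' That argument needs the $D_\xi$ to be \emph{weakly} closed, and they are only norm-closed: a fixed-point set of a nonexpansive map on a general Banach space need not be convex (that requires, e.g., strict convexity of $X$), hence need not be weakly closed, and nested nonempty norm-closed subsets of a weakly compact set can have empty intersection (e.g.\ in $\ell_2$, the sets $\{x:\Vert x\Vert=1,\ x_1=\dots=x_n=0\}$ inside the unit ball). So nonemptiness at limit ordinals is not a soft compactness fact; it is part of what Bruck's chain lemma delivers --- once you have a nonexpansive retraction of $C$ onto $D_\lambda$, nonemptiness is automatic as the image of $C$. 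As written, your induction has a gap at limit stages that is only repaired by invoking the deferred lemma in its full strength (``the intersection is a nonempty nonexpansive retract''), not by the weak-compactness argument you give.
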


Since $T^n(x)$ is  a fixed point of $T$ whenever $x$ is a common fixed point of $T^n$ and $T^{n+1}$, the following result easily follows.

\begin{theorem}[\cite{Ki2}] Let $X$ be a Banach space which satisfies the FPP. Then, every eventually nonexpansive mapping $T$ defined from a weakly compact convex set $C$ into $C$ has a fixed point.
\end{theorem}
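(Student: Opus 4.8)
The plan is to reduce the statement to the two theorems already quoted, exactly along the lines indicated by the sentence preceding it. First, using the definition of eventual nonexpansiveness, I would fix $N\in\N$ such that $T^n$ is nonexpansive for every $n\ge N$. Since $T$ maps $C$ into $C$, every iterate $T^n$ is again a self-mapping of $C$; in particular $T^N$ and $T^{N+1}$ are nonexpansive mappings of $C$ into itself.

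Next I would observe that these two maps commute, since $T^N\circ T^{N+1}=T^{2N+1}=T^{N+1}\circ T^N$ (more generally the whole family $\{T^n:n\ge N\}$ is commuting, and one could work with it instead). Applying the theorem of Bruck \cite{B2} to the commuting family $\{T^N,T^{N+1}\}$ of self-nonexpansive mappings of the weakly compact convex set $C$, I obtain that the set $F$ of their common fixed points is nonempty --- in fact a nonexpansive retract of $C$, although only its nonemptiness is needed here.

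Finally, choosing any $x\in F$, one has $T^Nx=x$ and $T^{N+1}x=x$, and therefore $Tx=T(T^Nx)=T^{N+1}x=x$, so $x$ is the desired fixed point of $T$.

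I do not anticipate any genuine obstacle: all the substantive work is already contained in Bruck's theorem, which we are entitled to invoke, and what remains is the routine verification that the relevant iterates are commuting self-nonexpansive maps, together with the one-line passage from a common fixed point of $T^N$ and $T^{N+1}$ back to a fixed point of $T$. The only point worth flagging is that one must pass to iterates of exponent at least $N$ in order to have nonexpansiveness available --- $T$ itself need be neither nonexpansive nor continuous, as Example~\ref{e1} shows --- and only afterwards recover a fixed point of $T$ itself.
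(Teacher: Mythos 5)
Your proof is correct and follows exactly the paper's own route: the paper also obtains the result by applying Bruck's theorem to the commuting nonexpansive iterates $T^N$ and $T^{N+1}$ on $C$ and then noting that a common fixed point of these two maps is a fixed point of $T$ itself. No gaps to report.
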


We can also consider mappings which are eventually nonexpansive in a weaker sense, namely: for each $x,y\in C$ there exists $N=N(x,y)\in \N$ such that $\Vert T^n x-T^n y\Vert \leq \Vert x-y\Vert$ for $n\geq N$. The following easy   example shows the difficulty to obtain a fixed point in this case.
\begin{example}\label{e2}
	 Define $T:[0,1]\to [0,1]$ by $T (x)= x/2$ if $0< x\leq 1$ and $T(0)=1$. It is  clear that $T$ is fixed point free and for each $x\in [0,1]$ we have $T^n x\leq 2^{1-n}$. Thus, $T^n(x)\to 0$ as $n\to \infty$ for all $x\in [0,1]$ and $\displaystyle \lim_n\Vert T^n x-T^n y\Vert=0,  \textrm{ for all }x,y\in [0,1].$ \end{example}

In this paper we will consider three scaled notions of asymptotic non-expansivity which have been considered in the literature:   pointwise  eventually nonexpansive mappings, pointwise  asymptotically nonexpansive mappings and asymptotically nonexpansive type mappings. We will revise some known results about existence of fixed points for these classes  of mappings  and we will state some new results under more general assumptions. In particular, we  solve several long-standing open problems concerning the existence of fixed points for these mappings in  absence of continuity \cite{KX}.

In Section 2, we  recall some geometric properties that we will need in order to state our theorems, namely,  uniform normal structure, weak uniform normal structure, uniform convexity and nearly uniform convexity.

Section 3 is dedicated to introduce the different asymptotic  nonexpansive conditions which have been considered in the literature. We  revise the most relevant fixed point results in this setting \cite{Ki1, Ki3, KiX,  GS, Xu} where it is assumed that the mapping $T$ has a continuous iterated and we solve several  questions which appear as open problems in \cite{KX, Ki3}. We include a quite technical example which shows that a pointwise eventually nonexpansive mapping does not have, in general, a continuous iterated, even if the domain is a compact set.

In Section 4 we prove several  lemmas that will be needed to state our main theorems. Lemma  \ref{Xu} is taken from \cite{Xu}. This lemma, jointly with  Lemmas \ref{invariant} \label{measure}, yield to Lemma \ref{main} which is a basic tool in order to prove our fixed point results.

In Section 5 we present our main results. Nominally, we prove the existence of fixed a point for a pointwise  asymptotically nonexpansive mapping $T:C\to C$  when $C$ is a bounded convex closed subset of a Banach space $X$ and  one of the following conditions is satisfied:
\begin{itemize}

\item $C$ is a compact set.

\item $X$ is nearly uniformly convex.
\end{itemize}
and we also prove the existence of fixed a point for a pointwise eventually nonexpansive mapping when the asymptotic center of any sequence in $C$ is a compact set.

We include an example which shows that the result for nearly uniformly convex spaces cannot be derived from Theorem 3.17 in \cite{BT} as claimed by the authors.

 A final section includes several problems concerning the existence of fixed point for pointwise eventually nonexpansive mappings which still remain  open.

\section{Preliminaries}

Henceforth, $(X,\Vert\cdot \Vert)$ will  be a Banach space with unit ball $B(0,1)$ and $C$ a nonempty weakly compact convex subset of $X$.

\medskip

Let $\{x_n\}$ be a bounded sequence  in $X$, the \emph{asymptotic radius} of  $\{x_n\}$ with respect to a subset $C$ of $X$ is given by
$$
r(C,\{x_n\})=\inf\{\limsup_n\Vert x_n-x\Vert: x\in C\},
$$
and the \emph{asymptotic center} of $\{x_n\}$ is the set
$$
A(C,\{x_n\})=\{x\in C: \limsup_n\Vert x_n-x\Vert=r(C,\{x_n\})\}.
$$
It is known that  $A(C,\{x_n\})$ is a nonempty  weakly compact convex set as $C$ is. Whenever $C= \overline{\textrm{co}}\, (\{x_n\})$ we write $r(\{x_n\})$ instead of $r(C,\{x_n\})$.

\medskip

We recall some  properties which can be satisfied by $X$.
\begin{definition}
	It is said that $X$ has \emph{uniform normal structure} if $N(X)> 1$, where $N(X)$ is the normal structure coefficient of $X$ defined by
	$$
	N(X)=\inf\left\{\frac{\text{diam}(A)}{r(A)}: A\subset X\text{ bounded closed convex },\text{diam}(A)>0\right\},
	$$
	where $\text{diam}(A)=\sup\{\Vert x-y\Vert:x,y\in A\}$ is the diameter of $A$ and $r(A)=\inf\{\sup\{\Vert x-y\Vert:y\in A\}:x\in A\}$ is the Chebyshev radius of $A$.	

The space is said to have \emph{weak uniform normal structure} if $WCS(X)>1$ where
	$$ WCS(X)=\inf\left\{\frac{\text{diam}_a\,(\{x_n\})}{r(\{x_n\})}\right\},$$   the infimum runs over all weakly convergent sequences  $ \{x_n\}$ which are not norm convergent and $\text{diam}_a\,(\{x_n\})=:\limsup_k \sup\{\Vert x_n-x_m\Vert: n,m\geq k\}$.
	
\end{definition}
\bigskip

The \emph{modulus of convexity} of $X$ is the function $\delta:[0,2]\to [0,1]$ defined by
$$
\delta(\varepsilon)=\inf \left\{1-\left\Vert \frac{x-y}{2}\right\Vert: x,y\in B(0,1) \text { with } \Vert x-y\Vert\geq \varepsilon\right\},
$$
and the \emph{characteristic of convexity} of $X$ is the number
$$
\varepsilon_0(X)=\sup\{\varepsilon\geq 0:\delta(\varepsilon)=0\}.
$$
It is known that $\varepsilon_0(X)=0$ if and only if $X$ is uniformly convex, while $\varepsilon_0(X)< 1$ implies that $X$ has uniform normal structure.

\bigskip
\begin{definition}
	Let $X$ be a Banach space and $\phi$ a measure of noncompactness on $X$. The modulus of noncompact convexity associated to $\phi$ is defined in the following way:
$$
\Delta_\phi(\varepsilon)=\inf\{1-d(0,A): A\subset B(0,1) \text { is convex }, \phi(A)\geq \varepsilon\}.
$$
\end{definition}

The characteristic of noncompact convexity of $X$ associated with the measure $\phi$ is defined by
$$
\varepsilon_\phi(X)=\sup\{\varepsilon\geq 0:\Delta_\phi(\varepsilon)=0\}.
$$
The space $X$ is said to be \emph{nearly uniformly convex} if $\varepsilon_\phi(X)=0$. We shall use the Kuratowski measure of  noncompact convexity defined for a nonempty bounded subset $A$ of $X$ as follows:

 \begin{eqnarray*}
\alpha(A) = \inf\{\varepsilon> 0&:& A \text { can be covered by a finite number of sets}\\& & \text { with  diameter smaller than } \varepsilon \}\end{eqnarray*}

\bigskip

\section{Pointwise asymptotic conditions}

There is  a  class of mappings which lies between the class of eventually nonexpansive mappings and the class of mappings which are eventually nonexpansive in the  weak sense  considered in the introduction.  It was introduced in  \cite{KiX} (see also \cite{Ki3}) .

\begin{definition}
	A mapping $T:C\to C$ is said to be {\rm pointwise  eventually nonexpansive} if for every $x\in C$ there exists $N(x)\in \mathbb{N}$ such that if $n\geq N(x)$
	$$\Vert T^n x-T^n y\Vert \leq \Vert x-y\Vert\,\,\,\, \textrm{ for all } y\in C.$$
\end{definition}

\begin{remark}\label{remark1}
	Notice that if $N=N(x)$, the mapping $T^N$ is continuous at $x$. We could guess that it should be possible to find an iterated which is nonexpansive, and so simultaneously continuous for all $x\in C$.  However, the following example shows a  pointwise eventually nonexpansive mappings  which does not have any continuous iterated.
\end{remark}

\begin{example} (A pointwise eventually nonexpansive mapping which does not have a continuous iterated).
	
	 Let $X=\ell_2$, $u_k=e_k/2^k$ and $C$ the convex compact set
	 $\overline{\textrm{co }} \{ u_k: k\in \mathbb{N}\}.$
	  Note that every $x=(x(k))\in C$ has the representation $\displaystyle x=\sum_{i=1}^\infty  \lambda_ku_k$ where $0\leq \lambda_k\leq 1$, $\displaystyle \sum_{k=1}^\infty \lambda_k \leq 1$ and $x(k)=\lambda_k/2^k$.
	Denote $P=\{ n\in N: n=2^j $ for some $j\in \mathbb{N}\}$ and $Q=\mathbb{N}\setminus P$. We write $\mu_k=0$ if $\lambda_k<1$ and $\mu_k= 1/2$ if $\lambda_k=1$. Define
	$$S\left( \sum_{k=1}^\infty \lambda_k u_k\right)= \sum_{k\in Q} \lambda_k u_{k+1} + \sum_{k\in P} \mu_k u_{k+1}.$$

	We will prove that for every $x\in C$, there exists $n(x)\in \mathbb{N}$  such that for every $n\geq n(x)$ there is a number $\alpha_n(x)\in [0,\infty)$ satisfying $\lim_n \alpha_n(x)=0$ and
	$$\Vert T^n x-T^ny\Vert \leq \alpha_n(x) \Vert x-y\Vert.$$
	In particular, $T$ is pointwise eventually nonexpansive.
	
	We will distinguish two cases:

	\begin{itemize}
	\item [(1)] Assume $x\notin \{u_k:k\in \mathbb{N\}}$.  We split again in two cases.

\medskip
	
	(1a) Assume $y\notin \{u_k:k\in \mathbb{N\}}$. For any $k\in \mathbb{N}$, we have that either $(T^nx)(k)=(T^ny)(k)=0$ or $(T^nx)(k)=(1/2^n)x(k)$ and $(T^ny)(k)=(1/2^n)y(k)$. Thus, $\Vert T^n x-T^n y\Vert \leq (1/2^n)\Vert x-y\Vert$.
	
\medskip

	(1b) Assume $y=u_k$ for some $k\in \mathbb{N}$. Denote $a=\min\{ 1-2^i x(i): i\in \mathbb{N}\}$. We have $\vert y(k)-x(k)\vert \geq a/(2^k)$. For $j\not= k$  we have $\vert (T^n x)(j)-(T^ny)(j) \vert =\vert (T^n x)(j)\vert \leq (1/2^n)\vert x(j)\vert \leq (1/2^n) \vert x(j)-y(j)\vert$. For $j=k$ we have
	\begin{eqnarray*} \vert (T^n x)(k)-(T^ny)(k) \vert &\leq & \vert (T^n x)(k)\vert +\vert (T^ny)(k) \vert \leq\frac{2}{2^{n+k}}\\&=&\frac{a}{a2^{n+k-1}}\leq  \frac{1}{2^{n-1}a} \vert x(k)-y(k)\vert. \end{eqnarray*}
	Thus,
	$$\Vert T^n x-T^n y\Vert \leq \frac{1}{ 2^{n-1}a}\Vert x-y\Vert.$$
	
	\item [(2)] Assume $x=u_k$. Let $q$ be an  integer in $P$ greater than $k$. For $n>q$ we have $T^n(u_k)=0$. Thus, $(T^nx)(k)=(T^ny)(k)=0$. For $j\not= k$ we have
	$$\vert (T^nx)(j)-(T^ny)(j)\vert =\vert (T^ny)(j)\vert \leq \frac{1}{2^n}\vert y(j)\vert = \frac{1}{2^n}\vert x(j)- y(j)\vert.$$
	Thus, $\Vert Tx-Ty\Vert \leq (1/2^n)\Vert x-y\Vert$.
\end{itemize}
	\medskip
	
	To finish, we will show that for each $n\in \mathbb{N}$, $T^n$ is discontinuous at infinitely many  points of $C$. Indeed, for a given  $n\in \mathbb{N}$, choose $k=2^j\in P$ such that $n<2^j$. We have
	$T^n (u_k)=2^{-(k+1)} u_{k+n}$ and $T(\lambda u_k)=0$ for every $\lambda \in (0,1)$.
\end{example}

Related to the notion of asymptotically nonexpansive mapping \cite{GK1}, the following concept has been considered:

\begin{definition}[\cite{Ki3}]
	A mapping $T:C\to C$ is said to be {\rm pointwise  asymptotically nonexpansive} if  for each $x\in C$ there exist $N(x)\in \mathbb{N}$ and a real sequence $\alpha_n(x)$ such that if $n\geq N(x)$
	$$\Vert T^n x-T^n y\Vert \leq \alpha_n(x)\Vert x-y\Vert\,\,\,\, \textrm{ for all } y\in C, $$
	where $\dis \lim_n \alpha_n(x)=1$.
\end{definition}

A more general class of mappings has been considered  by Kirk \cite{Ki1}:

\begin{definition}
	A mapping $T:C\to C$ is said to be an {\rm  asymptotically nonexpansive type } mapping if for each $x\in C$,
	$$
	\limsup_{n\to \infty}\{\sup\{\Vert T^nx-T^ny\Vert-\Vert x-y\Vert:y\in C\}\}\leq 0.
	$$
\end{definition}

 \begin{remark}It is easy to check that if $C$ is bounded then a pointwise asymptotically nonexpansive mapping $T$ is an asymptotically nonexpansive type mapping.
 	\end{remark}

Although it is not yet known whether the FPP is equivalent to the FPP  for pointwise eventually nonexpansive mappings,  some classical existence results for fixed points of nonexpansive mappings have been extended to some classes of  asymptotically nonexpansive type mappings.  Most of them assume, in addition,  the continuity of an iterated.

\medskip

In 1974 by Kirk \cite{Ki1} proved the following fixed point theorem in a  class of Banach spaces satisfying a property weaker than uniform convexity.

 \begin{theorem}[\cite{Ki1}]
 	Let $X$ be a Banach space for which $\varepsilon_0(X)< 1$, $C$  a nonempty bounded closed and convex subset of $X$, and $T:C\to C$ an asymptotically nonexpansive type mapping.  Suppose that there exists an integer $N\geq 1$ such that $T^N$ is continuous. Then $T$ has a fixed point.
 \end{theorem}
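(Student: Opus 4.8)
The plan is to produce the fixed point as the limit of a sequence $(x_k)$ in $C$ in which $x_{k+1}$ is chosen inside the asymptotic center $A(C,\{T^nx_k\})$ of the orbit of $x_k$, and to use the inequality $\varepsilon_0(X)<1$ to show that this choice shrinks the asymptotic radius of the orbit by the fixed factor $\varepsilon_0(X)$. Then $(x_k)$ is Cauchy, its limit $x^*$ satisfies $T^nx^*\to x^*$, and the continuity of $T^N$ upgrades this to $Tx^*=x^*$. The recurring obstacle is that $T$ itself is not assumed continuous, so the asymptotic center is not literally $T$-invariant; one must replace invariance by the approximate statement ``$T^m$ maps the asymptotic center into a neighborhood that shrinks as $m\to\infty$'', and the single continuous iterate $T^N$ is invoked only in the very last line.

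First I would record the standing facts. Since $\varepsilon_0(X)<1$, the space $X$ is reflexive, so $C$ is weakly compact and the asymptotic center in $C$ of any bounded sequence is nonempty, convex and weakly compact (the map $w\mapsto\limsup_n\|z_n-w\|$ is convex and norm-continuous, hence attains its minimum on the weakly compact set $C$, and its sublevel sets are weakly compact). From the asymptotically nonexpansive type condition together with the boundedness of $C$, put $c_m(z):=\max\{0,\ \sup_{y\in C}(\|T^mz-T^my\|-\|z-y\|)\}$; then $\lim_m c_m(z)=0$ for every $z\in C$, and $\|T^mz-T^my\|\le\|z-y\|+c_m(z)$ for all $y\in C$.

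The heart of the argument is the following estimate for a fixed $x\in C$. Write $r=r(C,\{T^nx\})$ and, for $\eta\ge 0$, let $A^{(\eta)}=\{w\in C:\limsup_n\|T^nx-w\|\le r+\eta\}$, a weakly compact convex set with $\bigcap_{\eta>0}A^{(\eta)}=A(C,\{T^nx\})$. If $z\in A(C,\{T^nx\})$, then from $\|T^{n+m}x-T^mz\|=\|T^m(T^nx)-T^mz\|\le\|T^nx-z\|+c_m(z)$ and shift-invariance of $\limsup$ one gets $\limsup_n\|T^nx-T^mz\|\le r+c_m(z)$, that is, $T^mz\in A^{(c_m(z))}$. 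On the other hand, for $u,v\in A^{(\eta)}$, applying the definition of the modulus of convexity $\delta$ to the vectors $T^nx-u$ and $T^nx-v$ (divided by a number slightly bigger than $r+\eta$) and using that $\tfrac{u+v}{2}\in C$ forces $\limsup_n\|T^nx-\tfrac{u+v}{2}\|\ge r$, one obtains $\delta\bigl(\text{diam}(A^{(\eta)})/(r+\eta)\bigr)\le\eta/(r+\eta)$; letting $\eta\to0^+$ and using $\delta(t)>0$ for $t>\varepsilon_0(X)$ yields $\limsup_{\eta\to0^+}\text{diam}(A^{(\eta)})\le\varepsilon_0(X)\,r$. (If $r=0$ the orbit of $x$ already converges and one passes directly to the final step.)

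Now I would iterate. Pick $x_0\in C$ and, inductively, $x_{k+1}\in A_k:=A(C,\{T^nx_k\})$. Since $x_{k+1}\in A_k\subseteq A_k^{(\eta)}$ for every $\eta$ and $T^nx_{k+1}\in A_k^{(c_n(x_{k+1}))}$, letting $n\to\infty$ in $\|T^nx_{k+1}-x_{k+1}\|\le\text{diam}(A_k^{(c_n(x_{k+1}))})$ gives $\limsup_n\|T^nx_{k+1}-x_{k+1}\|\le\varepsilon_0(X)\,r_k$, hence $r_{k+1}:=r(C,\{T^nx_{k+1}\})\le\varepsilon_0(X)\,r_k$ and $r_k\to 0$ geometrically. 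The triangle inequality $\|x_{k+1}-x_k\|\le\limsup_n\|x_{k+1}-T^nx_k\|+\limsup_n\|T^nx_k-x_k\|\le r_k+\varepsilon_0(X)r_{k-1}$ then makes $\sum_k\|x_{k+1}-x_k\|$ finite, so $x_k\to x^*\in C$. Using $\|T^nx^*-T^nx_k\|\le\|x^*-x_k\|+c_n(x^*)$ one checks $\limsup_n\|T^nx^*-x^*\|\le\|x^*-x_k\|+r_k+\|x_{k+1}-x^*\|$, which tends to $0$ as $k\to\infty$; hence $T^nx^*\to x^*$. Applying the continuity of $T^N$ to the convergent sequence $(T^nx^*)_n$ gives $T^Nx^*=x^*$, so the orbit of $x^*$ is $N$-periodic as well as convergent, therefore constant, and in particular $Tx^*=x^*$. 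I expect the two $\limsup$ manipulations in the previous paragraph — the ones standing in for the missing $T$-invariance of the asymptotic center — to be the delicate part; everything else is routine once reflexivity and the vanishing of $c_m(\cdot)$ are in hand.
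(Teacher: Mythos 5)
Your proposal is correct and follows essentially the route behind the cited result and the paper's own reading of it in Section 5: first show that some orbit $\{T^nx^*\}$ converges to $x^*$ (you do this via the standard estimate that, when $\varepsilon_0(X)<1$, the diameter of the approximate asymptotic center sets is at most $\varepsilon_0(X)$ times the asymptotic radius, iterating asymptotic centers and handling the lack of continuity of $T$ through the vanishing defects $c_m$), and then conclude $Tx^*=x^*$ from the continuity of $T^N$ at $x^*$ exactly as in the paper's Theorem \ref{cont}. The two $\limsup$ manipulations you flag are sound, and even the parenthetical $r=0$ case needs no separate treatment since your diameter bound and the Cauchy argument go through verbatim there.
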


In 2000, Kim and Xu \cite{KX} demonstrated that the uniform normal structure of the space $X$ implies the existence of fixed points for asymptotically nonexpansive mappings. Shortly after, this result was  extended to the class of  asymptotically nonexpansive type mappings \cite{GS}.

\begin{theorem}[\cite{GS}]
		Let $X$ be a Banach space with uniform normal structure, $C$  a nonempty bounded closed and convex subset of $X$, and $T:C\to C$ an asymptotically nonexpansive type mapping such that $T$ is continuous. Then $T$ has a fixed point.
\end{theorem}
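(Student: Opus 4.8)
\smallskip
\emph{Proof idea.} The strategy is the classical Kirk scheme --- pass to a minimal invariant set, show its (asymptotic) center is a proper invariant subset, and derive a contradiction --- adapted to the fact that an asymptotically nonexpansive type map is neither nonexpansive nor affine. First, the reductions: uniform normal structure forces $X$ to be reflexive (Maluta), so $C$ is weakly compact and asymptotic centers of bounded sequences relative to closed convex subsets of $C$ are nonempty, closed and convex. By Zorn's lemma the family of nonempty closed convex $T$-invariant subsets of $C$ has a minimal element $K$; since $\overline{\text{co}}\,(T(D))$ is closed, convex, $T$-invariant and contained in $D$ whenever $T(D)\subseteq D$, minimality gives $K=\overline{\text{co}}\,(T(K))$. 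It is enough to prove $\text{diam}(K)=0$, since then $T(K)\subseteq K$ is a single point, which is fixed. No continuity is used so far.

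Next, one wants to show the (asymptotic) center of $K$ is a proper $T$-invariant subset, contradicting minimality. If $T$ were nonexpansive one would finish at once: the Chebyshev center $Z=\{z\in K:\sup_{y\in K}\|z-y\|\le r(K)\}$ is nonempty, closed, convex, proper in $K$ by normal structure, and $T$-invariant because $\sup_{y\in K}\|Tz-y\|=\sup_{w\in K}\|Tz-Tw\|\le\sup_{w\in K}\|z-w\|$, using $K=\overline{\text{co}}\,(T(K))$ and that a continuous convex function has the same supremum over $\overline{\text{co}}(S)$ as over $S$. For a merely asymptotically nonexpansive type $T$ I would run the iterated analogue: fix $x_0\in K$, pass the orbit $\{T^nx_0\}$ to a regular subsequence, and let $r$ and $A=A(K,\{T^nx_0\})$ be its asymptotic radius and center. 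Using the type inequality \emph{at a point} $z\in A$ --- for each $\varepsilon>0$ there is $N$ with $\|T^mz-T^my\|\le\|z-y\|+\varepsilon$ for all $m\ge N$, $y\in K$ --- and taking $y$ along the orbit, one sees that $T^mz$ eventually lands in $A_\varepsilon=\{y\in K:\limsup_n\|T^nx_0-y\|\le r+\varepsilon\}$; the $A_\varepsilon$ are closed convex and decrease to $A$, so squeezing $\varepsilon\downarrow0$ together with $K=\overline{\text{co}}\,(T(K))$, minimality, and the continuity of $T$ should yield $A=K$. Then, by the asymptotic form of uniform normal structure --- there is $\widetilde N(X)<1$ with $\text{diam}\bigl(A(\overline{\text{co}}\,\{x_n\},\{x_n\})\bigr)\le\widetilde N(X)\,\text{diam}_a(\{x_n\})$ for regular sequences, a Casini--Maluta / Lim--Xu type fact --- and the trivial bound $\text{diam}_a(\{T^nx_0\})\le\text{diam}(K)$, one gets $\text{diam}(K)=\text{diam}(A)\le\widetilde N(X)\,\text{diam}(K)$, forcing $\text{diam}(K)=0$.

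The step I expect to be the real obstacle is the passage from "the orbit of $z\in A$ eventually enters every enlargement $A_\varepsilon$" to a genuine $T$-invariant subset of $K$ (equivalently, $A=K$), and this is exactly where the hypotheses earn their keep. Two features make it delicate. First, the type decay $\limsup_n\sup_y\bigl(\|T^nx-T^ny\|-\|x-y\|\bigr)\le 0$ is only pointwise in $x$, not uniform; this blocks the cleanest device --- forming an ultrapower $\widetilde X$ on which $[(x_n)]\mapsto[(T^nx_n)]$ is nonexpansive, as one does for honest asymptotically nonexpansive maps, where the well-definedness of this ultrapower map is automatic but here is precisely the sticking point. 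Second, $A$ absorbs the orbit only in the limit, so its invariance is not literal and must be recovered through a compactness/minimality argument in which the continuity of $T$ is indispensable --- it is what permits one to build honest $T$-invariant sets out of orbit tails and confront them with the minimality of $K$. The remaining ingredient, the quantitative asymptotic normal-structure inequality, I would import from the literature rather than reprove.
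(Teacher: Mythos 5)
Your sketch stalls exactly where you say it does, and that gap is not a technicality but the crux of the theorem. From ``for $z\in A$ the iterates $T^mz$ eventually lie in every enlargement $A_\varepsilon$'' there is no visible route to either $A=K$ or to a proper closed convex $T$-invariant subset of $K$: the sets $A$ and $A_\varepsilon$ are not $T$-invariant, minimality of $K$ only speaks about closed convex $T$-invariant sets, and plain (non-uniform) continuity of $T$ does not let you manufacture invariant sets out of orbit tails, because $T$ is neither affine nor nonexpansive, so $T\bigl(\overline{\mathrm{co}}\,D\bigr)$ is not controlled by $\overline{\mathrm{co}}\,(T(D))$ and the closed convex hull of a tail $\{T^nx_0:n\ge m\}$ is not invariant. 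This is precisely why Kirk-type minimal-invariant-set arguments do not transfer to asymptotically nonexpansive type maps. There is also a secondary weak point at the end: the inequality you invoke bounds the \emph{diameter} of the asymptotic center by $\widetilde N(X)\,\mathrm{diam}_a(\{x_n\})$ with $\widetilde N(X)<1$; the standard Casini--Maluta/Kim--Xu type estimates control the asymptotic \emph{radius}, not the diameter of the center (trivially one only gets $\mathrm{diam}(A)\le 2r$), and the paper's final Remark and Example show how delicate quantitative statements about asymptotic centers are without regularity assumptions, so this ingredient cannot simply be ``imported.''

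For comparison, this theorem is quoted in the paper from Li and Sims \cite{GS}, and their proof (modelled on Kim--Xu \cite{KX}) avoids minimal invariant sets altogether. One iterates the asymptotic-center construction: starting from $x_0\in C$, choose $x_{m+1}$ in the asymptotic center of the orbit $\{T^nx_m\}$ relative to $C$; uniform normal structure yields a coefficient $\lambda<1$ with $r(C,\{T^nx_{m+1}\})\le \lambda\, r(C,\{T^nx_m\})$ together with control of $\Vert x_{m+1}-x_m\Vert$, so the radii decay geometrically, $\{x_m\}$ is Cauchy, and its limit $x$ satisfies $T^nx\to x$; continuity of $T$ (or of some iterate at $x$) then gives $Tx=x$. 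This is exactly the feature the present paper exploits: it records that the proof in \cite{GS} actually produces a point with $T^nx\to x$, and then applies its Theorem \ref{cont} to replace global continuity by continuity of an iterate at that single point. If you want to salvage your approach you would have to supply a genuinely new mechanism for producing invariant sets from asymptotic centers of orbits; as written, the argument does not close.
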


The same result  had already been proved by Xu \cite{Xu}  in a nearly uniformly convex Banach space. At this point, it should be remembered that nearly uniform convexity implies normal structure but does not imply uniform normal structure (see pages 78-79 in \cite{GK2}).
\begin{theorem}[\cite{Xu}]
	Let $X$ be nearly uniformly convex Banach space, $C$  a nonempty bounded closed and convex subset of $X$, and $T:C\to C$ an asymptotically nonexpansive type mapping.  Suppose that there exists an integer $N\geq 1$ such that $T^N$ is continuous. Then $T$ has a fixed point.
\end{theorem}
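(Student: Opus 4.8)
The plan is to run the Goebel--Kirk asymptotic-center argument, using near uniform convexity to supply the needed compactness in place of the uniform-convexity fact that asymptotic centers of orbits reduce to a single point. Two consequences of near uniform convexity are used. First, $X$ is reflexive, so $C$ and all its closed convex subsets are weakly compact. Second, for a bounded sequence $\{x_n\}$ and a closed convex $D\subseteq C$, writing $\rho=r(D,\{x_n\})$ and $D_\varepsilon=\{u\in D:\limsup_n\|x_n-u\|\le\rho+\varepsilon\}$, the Kuratowski measure $\alpha(D_\varepsilon)$ tends to $0$ as $\varepsilon\to0^+$ (this is the lemma of \cite{Xu}); in particular $A(D,\{x_n\})=\bigcap_{\varepsilon>0}D_\varepsilon$ is nonempty, convex and compact.

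The basic estimate is the following. For $z\in C$ set $\varepsilon_m(z)=\max\{0,\ \sup_{y\in C}(\|T^mz-T^my\|-\|z-y\|)\}$; the asymptotically-nonexpansive-type hypothesis says exactly that $\varepsilon_m(z)\to0$, and gives $\|T^mz-T^mw\|\le\|z-w\|+\varepsilon_m(z)$ for all $w\in C$. Fixing $x_0\in C$ and putting $x_n=T^nx_0$, $r=r(C,\{x_n\})$, $A=A(C,\{x_n\})$, one obtains for $z\in A$ and all $m$
\[
\limsup_n\|x_n-T^mz\|=\limsup_n\|T^m(T^nx_0)-T^mz\|\le r+\varepsilon_m(z),
\]
so $T^mz\in C_{\varepsilon_m(z)}$. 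Hence every orbit issued from $A$ is eventually trapped in sublevel sets of vanishing $\alpha$-measure, and therefore has relatively compact tails whose cluster points lie in $A$.

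To pin down a fixed point I would iterate the asymptotic-center construction. Set $A_0=C$, pick $w_0\in A_0$, and recursively let $A_{k+1}=A(A_k,\{T^nw_k\})$ — a nonempty compact convex subset of $A_k$, with radius $r_{k+1}=r(A_k,\{T^nw_k\})$ — choosing $w_{k+1}\in A_{k+1}$. Using the compactness estimate of \cite{Xu} in place of a normal-structure constant, one forces $r_k\to0$, whence $\bigcap_kA_k=\{v\}$ for a single point $v$ and $\limsup_n\|T^nw_k-v\|=r_{k+1}\to0$. Applying the basic estimate at $z=v$ and the triangle inequality through $T^{n+m}w_k=T^m(T^nw_k)$, then letting $n\to\infty$ and $k\to\infty$, gives $\|T^mv-v\|\le\varepsilon_m(v)$, so $T^mv\to v$. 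Now the continuity of $T^N$ enters, exactly once: $T^{m+N}v=T^N(T^mv)\to T^Nv$ while also $T^{m+N}v\to v$, so $T^Nv=v$; the sequence $\{T^mv\}$ is then periodic, a convergent periodic sequence is constant, and therefore $Tv=v$.

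The main obstacle is the step $r_k\to0$. In the uniformly convex case it is immediate — $A$ is already a single point, so the scheme terminates at once, which is essentially Goebel--Kirk — but in a merely nearly uniformly convex space there is no uniform normal-structure constant to push the radii down, and the convergence of the tower $\{A_k\}$ must instead be extracted from the compactness of asymptotic centers, coordinated with the fact that only $T^N$, not $T$ itself, is continuous. The remaining ingredients (reflexivity of $X$, the convexity and weak lower semicontinuity of $u\mapsto\limsup_n\|x_n-u\|$, and the elementary properties of $\alpha$) are routine.
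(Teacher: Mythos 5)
There is a genuine gap, and you have put your finger on it yourself: the step $r_k\to 0$ is never proved, and it is precisely the mathematical content of the theorem. In a nearly uniformly convex space there is no normal-structure-type constant forcing the radii of the nested asymptotic centers $A_{k+1}=A(A_k,\{T^nw_k\})$ to decrease geometrically, and compactness of the sets $A_k$ by itself does not force the radii to decrease at all. In fact the paper exhibits (via the example taken from \cite{DL}, in the remark following Theorem \ref{NUC}) a nearly uniformly convex space $X$, a compact convex set $C$ and a sequence $\{x_n\}\subset C$ together with a sequence $\{y_n\}\subset A(C,\{x_n\})$ for which $r(C,\{y_n\})=r(C,\{x_n\})=1$: the inequality $r(C,\{y_n\})\le\lambda\, r(C,\{x_n\})$ with $\lambda<1$, which is exactly what your tower argument needs, is false without regularizing the sequences (this is the very flaw the paper identifies in \cite{BT}). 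So the scheme as sketched stalls at its central step, and there is also a secondary invariance problem: for $z\in A_{k+1}$ your basic estimate places $T^mz$ only in enlarged sublevel sets relative to $C$, not in $A_k$, so the asymptotic center relative to $A_k$ is not even approximately invariant under the iterates.

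The route that actually works (Xu's original one, generalized in the paper) is different: by Zorn's lemma take a closed convex nonempty $K\subseteq C$ minimal subject to $\omega_w(x)\subseteq K$ for all $x\in K$; Lemma \ref{Xu} then yields a single constant $\rho\ge 0$ with $\limsup_n\Vert T^nx-y\Vert=\rho$ for all $x,y\in K$, and near uniform convexity enters through the modulus of noncompact convexity $\Delta_\alpha$: if some orbit issuing from $K$ had a $d$-separated subsequence one gets $\rho\le(\rho+\varepsilon)\bigl(1-\Delta_\alpha\bigl(d/(\rho+\varepsilon)\bigr)\bigr)$ and hence a contradiction, so orbits in $K$ have compact tails and one extracts a point $z$ with $T^nz\to z$; only then does the continuity of $T^N$ give $T^Nz=z$ and hence $Tz=z$ (your final periodicity argument for this last step is fine and is the paper's Theorem \ref{cont}). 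The moral is that the minimality of $K$ and the constancy of $\rho$ replace the radius-shrinking you were hoping to force, and without some substitute of this kind your proposal does not close.
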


Bearing in mind Example \ref{e2}, the continuity assumption cannot be dropped in the preceding theorems.
Notice that the mapping in Example \ref{e2} is an asymptotically nonexpansive type mapping but it is not a pointwise eventually asymptotically nonexpansive mapping. Motivated by this fact, some natural questions have been raised (see \cite{Ki3, KiX}): Does a pointwise eventually nonexpansive mapping (or more generally, a pointwise asymptotically nonexpansive mapping)
defined on a weakly compact convex subset $C$ of a Banach space $X$ have a fixed point if $X$ satisfies some of the following conditions
\begin{itemize}
	\item (1) $X$ is a  Banach space with the FPP
	
	\item (2) $X$ has uniform normal structure
	
	\item (3) $X$ is nearly uniformly convex
	
	\item (4) The asymptotic center relative to $C$ of each sequence in $C$ is compact?
\end{itemize}

A positive answer to Question (2) is given   in \cite[Theorem 3.4]{RR} for pointwise asymptotically nonexpansive mappings. Nevertheless, we will see that this theorem is actually a consequence of the proof carried out   to prove Theorem 2.1 in \cite{GS}.

 Our main results in  the last section of this paper  will give a positive answer to Questions (3) and (4). Note that both questions are independent. Indeed, in \cite{KP}, an example is shown of a Banach space which is nearly uniformly convex but asymptotic centers of bounded sequences are not, in general, compact. On the other hand, the following example shows a Banach spaces which fails to be nearly uniformly convex, but it is uniformly rotund in every direction (see \cite {Zi}) which implies that any asymptotic center of a bounded sequence is a singleton.

 \begin{example} Assume that $X$ is the $\ell_2$ product of the space $\ell_k$, $k\geq 2$, i.e. $X=\{(x(k)): x(k)\in \ell_k\textrm{ such that } \sum_{k=2}^\infty \Vert x(k)\Vert_k^2<\infty\}$ where $\Vert x(k)\Vert_k $ is the norm of the vector $x(k)$ in $\ell_k$.  This space can be equipped with the norm
 $$\Vert (x(k))\Vert=\left( \sum_{k=2}^\infty \Vert x(k)\Vert_k^2\right)^{1/2}.$$
 Since $X$ contains isometrically $\ell_k$ for every $k\geq 2$ and $WCS(\ell_k)=2^{1/k}$ we have $WCS(X)=1$ which implies that $X$ fails to be nearly uniformly convex (see, for instance,  \cite[Chapter 6]{ADL}).  However, $X$ is uniformly rotund in every direction. Indeed, let $\{x_n\}$ be a bounded sequence  and $z$ a normalized vector in $X$. Assume that
 $$\lim_n 2\left( \Vert x_n+z\Vert ^2 +\Vert x_n\Vert^2\right)-\Vert 2x_n+z\Vert^2 =0.$$
 Choose $k\geq 2$ such that $z(k)$ is a non-null vector of $\ell_k$. Denote $\tilde z = (\tilde z(j))$ where $\tilde z (j)=z(j)$ if $j\not= k$ and $\tilde z(k)=0$. Define analogously $\tilde x_n$. We have
 $$ \Vert x_n+z\Vert^2=\Vert \tilde x_n+\tilde z\Vert^2 + \Vert x_n(k)+z(k)\Vert_k^2;$$
 $$ \Vert 2x_n+z\Vert^2=\Vert 2\tilde x_n+\tilde z\Vert^2 + \Vert 2x_n(k)+z(k)\Vert_k^2;$$
 $$ \Vert x_n\Vert^2=\Vert \tilde x_n\Vert^2 + \Vert x_n(k)\Vert_k^2;$$
 $$2\left( \Vert \tilde x_n+\tilde z\Vert ^2 +\Vert \tilde x_n\Vert^2\right)-\Vert 2\tilde x_n+ \tilde z\Vert^2 \geq 0.$$
 Thus,  $$\lim_n 2\left( \Vert x_n(k)+z(k)\Vert ^2_k +\Vert x_n(k)\Vert^2_k\right)-\Vert 2x_n(k)+z(k)\Vert_k^2 =0$$
 which is a contradicition because $\ell_k$ is uniformly convex (see \cite[Proposition 1]{Zi}).

 \end{example}

\section{Previous lemmas}

Let us establish some  lemmas which will be used in our main result. Assume that  $C$ is a nonempty weakly compact convex subset of a Banach space $X$ and $T:C\to C$  an asymptotically nonexpansive type mapping.

For each $x\in C$, let us denote by $\omega(x)$  the cluster point set of the sequence $\{T^nx\}$ for the norm topology.  Similarly, $\omega_w(x)$ will be the cluster point set of the sequence $\{T^nx\}$ for the weak topology.

\begin{remark}
	Note that  the cluster point set $A$ of a sequence $\{x_n\}$ is a closed set in any metric space. Indeed if $\{u_k\}$ is a sequence in $A$ convergent to $u$, we can inductively choose  integers $n_k>n_{k-1}$ such that $d(x_{n_k},u_k)<1/k$. The sequence $\{x_{n_k}\}$ converges to $u$ and $u\in A$.
\end{remark}

 According to Remark 1 in \cite{Xu},  there exists a closed convex nonempty subset $K$ of $C$ such that  $\omega_w(x) \subset K$ for every $x\in K$ and it is minimal under these conditions.
Indeed, denote $ \mathfrak{F}$ the collection formed by all closed convex nonempty subsets $D$ of $C$ which contain $\omega_w(x)$ for every $x\in D$. Let $ \mathfrak{F}$ be ordered by inclusion. Then, $C\in \mathfrak{F}$ and for every chain $\{D_i:I\in I\}$ in  $\mathfrak{F}$ we have that $\cap_{i\in I} D_i \in  \mathfrak{F}$. By Zorn's lemma, we obtain a minimal set $K$ in $ \mathfrak{F}$.

 To prove our main result (see Lemma \ref{main} in this paper) we will use the following lemma from \cite {Xu}.

\begin{lemma}[\cite{Xu}] \label{Xu} Let $C$ be a weakly compact convex subset of a Banach space $X$, $T:C\to C$ an asymptotically nonexpansive type mapping and $K$ a closed convex nonempty subset of $C$ such that the cluster point set $\omega_w(x)$ of the sequence $\{T^nx\}$ is contained in $K$ for every $x\in K$ and the set $K$ is minimal under these conditions. Then there exists $\rho\geq 0$ such that
	$$\limsup_n \Vert T^n x-y\Vert=\rho $$
for every $x,y\in K$. \end{lemma}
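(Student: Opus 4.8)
\emph{Plan.}
For $x\in C$ put $c_n(x)=\sup\{\|T^nx-T^ny\|-\|x-y\|:y\in C\}$, so the hypothesis reads $\limsup_n c_n(x)\le 0$; applying the definition to the iterate $T^n$ at the points $y$ and $T^kx$ gives the \emph{composition inequality} $\|T^{n+k}x-T^ny\|\le\|T^kx-y\|+c_n(y)$, and of course $\|T^nx-T^ny\|\le\|x-y\|+c_n(x)$. As $C$ is weakly compact it is bounded, so every orbit is bounded and $\Phi(x,y):=\limsup_n\|T^nx-y\|$ is a finite real number for all $x,y\in C$. I would record three facts: (i) $y\mapsto\Phi(x,y)$ is convex and $1$-Lipschitz, hence weakly lower semicontinuous; (ii) $\Phi(T^kx,y)=\Phi(x,y)$ for all $k\ge0$ (index shift); and (iii) the \emph{pull-back inequality}: if $z\in\omega_w(y)$ then $\Phi(x,z)\le\Phi(y,x)$ for every $x\in C$. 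For (iii), pick $m_k\uparrow\infty$ with $T^{m_k}y\rightharpoonup z$; for each fixed $n$ and $k$ large enough that $m_k>n$, weak lower semicontinuity of the norm and the composition inequality give $\|T^nx-z\|\le\liminf_k\|T^nx-T^n(T^{m_k-n}y)\|\le c_n(x)+\liminf_k\|x-T^{m_k-n}y\|\le c_n(x)+\Phi(y,x)$, and letting $n\to\infty$ yields $\Phi(x,z)\le\Phi(y,x)$.

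Next I would extract the structural consequence of minimality. The set $Z:=\overline{\mathrm{co}}\bigl(\bigcup_{y\in K}\omega_w(y)\bigr)$ is nonempty (each $\omega_w(y)\ne\emptyset$ by weak compactness), closed, convex, and contained in $K$ (each $\omega_w(y)\subseteq K$, $K$ closed convex); moreover $Z\in\mathfrak F$, since for $w\in Z\subseteq K$ one has $\omega_w(w)\subseteq\bigcup_{y\in K}\omega_w(y)\subseteq Z$. By minimality of $K$, therefore $K=\overline{\mathrm{co}}\bigl(\bigcup_{y\in K}\omega_w(y)\bigr)$.

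Now set $R(x):=\sup_{y\in K}\Phi(y,x)$ for $x\in K$. Being a supremum of convex $1$-Lipschitz functions, $R$ is convex, $1$-Lipschitz and weakly lower semicontinuous, hence attains its infimum $\rho:=\inf_{x\in K}R(x)\ge0$ on the weakly compact set $K$, and $B:=\{x\in K:R(x)\le\rho\}=\bigcap_{y\in K}\{x\in K:\Phi(y,x)\le\rho\}$ is a nonempty, closed, \emph{convex} subset of $K$. I claim $B\in\mathfrak F$: if $x\in B$ and $v\in\omega_w(x)\subseteq K$, then for every $y\in K$ the pull-back inequality gives $\Phi(y,v)\le\Phi(x,y)$; using the convexity and continuity of $\Phi(x,\cdot)$ together with the previous paragraph, $\Phi(x,y)\le\sup_{w\in K}\Phi(x,w)=\sup_{w\in\bigcup_{y'\in K}\omega_w(y')}\Phi(x,w)$, and for $w\in\omega_w(y')$ the pull-back inequality again gives $\Phi(x,w)\le\Phi(y',x)\le R(x)\le\rho$; hence $\Phi(y,v)\le\rho$ for all $y\in K$, i.e. $v\in B$. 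By minimality, $B=K$, so $R\equiv\rho$ on $K$. In particular $\Phi(x,y)\le\rho$ for all $x,y\in K$; and applying the pull-back inequality once more (pick, for each $y'$, some $x\in\omega_w(y')$, so that $R(x)=\rho$ and $\Phi(a,x)\le\Phi(y',a)$ for all $a$) one also gets $\sup_{w\in K}\Phi(y',w)=\rho$ for every $y'\in K$.

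The remaining point — the lower bound $\Phi(x,y)\ge\rho$ for all $x,y\in K$, which combined with the above gives $\Phi\equiv\rho$ and hence the theorem — is the heart of the matter and the step I expect to be hardest. The obstruction is precisely the asymmetry of $\Phi$: the pull-back inequality controls the \emph{target} slot of $\Phi$ (a weak $\omega$-limit placed in the second argument) but not the \emph{source} slot, because $T$ need not be weakly continuous, so a weak $\omega$-limit placed in the first argument of $\Phi$ cannot be treated by the same device; this is exactly why the sublevel-set trick of the previous paragraph works for $R(x)=\sup_y\Phi(y,x)$ but not obviously for the ``asymptotic radius'' $x\mapsto\inf_y\Phi(x,y)$. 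The expected route is again by minimality: if $\Phi(x,\cdot)$ were not constant on $K$ for some $x$, then the proper set $\{y\in K:\Phi(x,y)\le\inf_K\Phi(x,\cdot)\}$ is closed and convex, and one tries to show it lies in $\mathfrak F$ to contradict minimality; verifying its $\omega_w$-stability is where the lack of continuity of $T$ bites, and it should be overcome by an iterated application of the pull-back inequality (through $\omega_w(\omega_w(\cdot))$) together with the identities $\Phi(T^ky,\cdot)=\Phi(y,\cdot)$ and the structural equality $K=\overline{\mathrm{co}}(\bigcup_y\omega_w(y))$. Once $\Phi\equiv\rho$ on $K\times K$ is established, $\rho\ge0$ is the required constant.
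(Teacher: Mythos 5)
Your preparatory work is correct and is genuine progress: with $\Phi(x,y)=\limsup_n\Vert T^nx-y\Vert$, the pull-back inequality $\Phi(x,z)\le\Phi(y,x)$ for $z\in\omega_w(y)$, the identity $K=\overline{\mathrm{co}}\bigl(\bigcup_{y\in K}\omega_w(y)\bigr)$ forced by minimality, and the minimality argument applied to the sublevel set of $R(x)=\sup_{y\in K}\Phi(y,x)$ are all sound, and they do yield $\Phi(x,y)\le\rho$ for all $x,y\in K$ together with $\sup_{y\in K}\Phi(x,y)=\sup_{y\in K}\Phi(y,x)=\rho$ for every $x\in K$. (There is nothing in the paper to compare against step by step: the lemma is quoted from \cite{Xu} without proof, so a complete argument, or the citation, is what is required of you.)

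However, the lemma asserts the pointwise equality $\limsup_n\Vert T^nx-y\Vert=\rho$ for every pair, and the lower bound $\Phi(x,y)\ge\rho$ — the actual content of the statement — is exactly what your last paragraph leaves unproved; it is a plan, not an argument, and the route you sketch does not close with the tools you have. Concretely, if $\Phi(x_0,y_0)<\rho$, the candidate set $D=\{y\in K:\Phi(x_0,y)\le t\}$ with $\Phi(x_0,y_0)\le t<\rho$ is nonempty, closed, convex and proper, but to contradict minimality you must verify its $\omega_w$-stability, and for $y\in D$, $z\in\omega_w(y)$ the only available estimate is $\Phi(x_0,z)\le\Phi(y,x_0)$, a quantity your partial results control only by $\rho$, not by $t$; the symmetric device (sublevel sets in the first variable) is blocked because $x\mapsto\Phi(x,y)$ is neither convex nor stable under weak limits of orbits, $T$ not being weakly continuous — as you yourself observe. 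Note also that the missing half is precisely the half this paper uses: the proof of Theorem \ref{NUC} needs $\limsup_n\Vert T^nx-x_\infty\Vert=\rho$ for a specific weak cluster point $x_\infty\in\omega_w(x)$, whereas your results give only $\Phi(x,x_\infty)\le\Phi(x,x)\le\rho$. As it stands the proposal establishes a strictly weaker statement, and filling the gap needs a genuinely new idea (or an appeal to Xu's original proof), not a routine completion.
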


\begin{lemma} \label{image} Let $C$ be a weakly compact convex subset of a Banach space $X$, $T:C\to C$ an asymptotically nonexpansive type mapping.  Assume that $H$ is a closed subset of $C$ such that for every $x\in H$, every subsequence of $\{T^n x\}$ has a  further (norm)-convergent subsequence.  Then,  for every $x\in H$, $\omega(x)$ is a compact set. Furthermore, if $x_0\in H$ and $x\in \omega(x_0)$, for every increasing sequence $\{h_k\}$ of positive integers, there exist  a subsequence, again denoted $\{h_k\}$,   and a sequence $\{v_k\}$ in $\omega(x_0)$ such that $T^{h_k} v_k=x$. \end{lemma}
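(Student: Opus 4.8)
The first assertion is a direct compactness observation. Fix $x\in H$. The hypothesis says exactly that $\{T^nx\}$ is relatively sequentially compact for the norm, which in the metric space $(X,\Vert\cdot\Vert)$ is equivalent to the norm-compactness of $\overline{\{T^nx\}}$. Since every element of $\omega(x)$ is a norm-cluster point of $\{T^nx\}$, we have $\omega(x)\subset\overline{\{T^nx\}}$, and $\omega(x)$ is closed (cluster point sets are closed in any metric space, as recalled in the remark above). A closed subset of a compact set is compact, so $\omega(x)$ is compact.

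For the second assertion, fix $x_0\in H$ and $x\in\omega(x_0)$, and choose a strictly increasing sequence $\{n_i\}$ with $\Vert T^{n_i}x_0-x\Vert\to 0$. Let $\{h_k\}$ be the given increasing sequence; since we are allowed to pass to a subsequence, we may assume the $h_k$ grow as fast as needed. For each fixed $k$ the points $w_i^{(k)}:=T^{\,n_i-h_k}x_0$ (defined once $n_i>h_k$) belong to $\{T^nx_0\}$, whose closure is compact by the first part; hence along a subsequence of $\{n_i\}$ they converge to some $v_k\in\omega(x_0)$ (the exponents $n_i-h_k\to\infty$, so the limit is indeed a cluster point of the orbit). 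A diagonal extraction over $k$ lets us use a single increasing sequence $\{n_i\}$ for all $k$ at once. The basic relation is
\[
T^{h_k}w_i^{(k)}=T^{n_i}x_0\longrightarrow x\qquad(i\to\infty),
\]
so for each $k$ we have simultaneously $w_i^{(k)}\to v_k$ and $T^{h_k}w_i^{(k)}\to x$.

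If $T^{h_k}$ were continuous at $v_k$ the equality $T^{h_k}v_k=x$ would be immediate; this is exactly what we cannot assume. The tool that replaces continuity is the asymptotically nonexpansive type property used \emph{at the point $v_k$}. Writing $d_h(v):=\sup\{\Vert T^hv-T^hy\Vert-\Vert v-y\Vert:y\in C\}$, the definition gives $\limsup_h d_h(v_k)\leq 0$; taking $y=w_i^{(k)}\in C$ in the inequality $\Vert T^{h_k}v_k-T^{h_k}y\Vert\leq\Vert v_k-y\Vert+d_{h_k}(v_k)$ and letting $i\to\infty$ (using $w_i^{(k)}\to v_k$ and $T^{h_k}w_i^{(k)}\to x$) yields
\[
\Vert T^{h_k}v_k-x\Vert\leq d_{h_k}(v_k).
\]
Thus $T^{h_k}v_k$ can be made as close to $x$ as we please by taking $h_k$ large.

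The \textbf{main obstacle}, and the heart of the lemma, is to upgrade this approximation to the exact identity $T^{h_k}v_k=x$: a single passage to the limit cannot close the gap, precisely because $T^{h_k}$ is discontinuous and $d_{h_k}(v_k)$ is only asymptotically nonpositive rather than zero. My plan is a second, nested compactness step. For each $k$ the construction above produces, for every tolerance $1/m$, a point $u_{k,m}\in\omega(x_0)$ with $\Vert T^{h_k}u_{k,m}-x\Vert\leq 1/m$; since $\omega(x_0)$ is compact by the first part, $\{u_{k,m}\}_m$ clusters to some $v_k\in\omega(x_0)$, and the task is to arrange the selection so that the limiting defect is genuinely $0$ rather than merely small. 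I expect this to be the delicate point, to be forced by the rigidity available on the relevant minimal invariant piece furnished by Lemma \ref{Xu}, where $\limsup_n\Vert T^nx-y\Vert$ equals a constant $\rho$ independent of $x,y$: this isometry-like tightness is what should pin the iterates down closely enough that the defect in the realization must vanish. Verifying that this rigidity indeed delivers the exact equality, uniformly in $k$ after the diagonal extraction, is where the real work lies.
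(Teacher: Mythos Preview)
Your first paragraph (compactness of $\omega(x)$) is fine and in fact slightly cleaner than the paper's contradiction argument.

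In the second part you set things up correctly and you identify the genuine obstacle: applying the asymptotically nonexpansive type condition at the \emph{moving} point $v_k$ gives only $\Vert T^{h_k}v_k-x\Vert\le d_{h_k}(v_k)$, and you cannot control $d_{h_k}(v_k)$ because the base point changes with $k$. But your proposed resolution is off track. Lemma~\ref{Xu} and the constant $\rho$ play no role whatsoever in the proof of this lemma; invoking the minimal invariant set here would be circular in spirit (Lemma~\ref{main} uses the present lemma, not the other way around) and in any case the ``rigidity'' you allude to does not force exact realizability of $x$ as an image.

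The missing idea is much simpler and purely local. After constructing the $v_k\in\omega(x_0)$, use the compactness of $\omega(x_0)$ once more to pass to a subsequence (of the $h_k$'s) along which $v_k\to v$ for some fixed $v\in\omega(x_0)$. Now anchor the asymptotic condition at this \emph{single} point $v$: given $\varepsilon>0$ choose $n_0$ so that $\sup_u\{\Vert T^n v-T^n u\Vert-\Vert v-u\Vert\}<\varepsilon/6$ for all $n\ge n_0$. For $k$ large one has $h_k\ge n_0$ and $\Vert v-v_k\Vert<\varepsilon/6$, and then the triangle inequality through the intermediate point $T^{h_k}v$,
\[
\Vert x-T^{h_k}v_k\Vert\le \Vert x-T^{n_i(k)}x_0\Vert+\Vert T^{h_k}(T^{n_i(k)-h_k}x_0)-T^{h_k}v\Vert+\Vert T^{h_k}v-T^{h_k}v_k\Vert,
\]
with both middle and last term estimated via the condition \emph{at $v$}, yields $\Vert x-T^{h_k}v_k\Vert\le\varepsilon$. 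The whole point is that $d_{h_k}(v)$ (base point fixed) is small for large $k$, whereas $d_{h_k}(v_k)$ need not be. Once you see this anchoring trick, no appeal to minimality or to $\rho$ is needed.
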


\begin{proof}  Indeed, if $\omega(x)$ were not compact, we could find a $d$-separated sequence $\{x_k\}$ in $\omega(x)$. By induction, we choose $n_k>n_{k-1}$ such that $\Vert x_k-T^{n_k} x\Vert < d/3$. Then, $\{T^{n_k} x\}$ is a $d/3$-separated subsequence of $\{T^n x\}$ contradicting the assumptions. Assume that $\{T^{n_i}x_0\}\to x$. For any fixed $k$, we can take a subsequence $\{n_i(k)\}_i$ of $\{n_i\}$ such that the sequence $\{T^{n_i(k)-h_k} x_0\}_i$ converges, say to $v_k$. By compactness, there is a convergent subsequence of $\{v_k \}$, denoted again   $\{v_k \}$,   say to $v$.
For an arbitrary $\varepsilon >0$  there exists $n_0\in \N$,  such that for all positive integer $n$ greater than or equal to $n_0$, we have
$$\sup\{\Vert T^n{v}-T^n{u}\Vert -\Vert v-u\Vert:u\in C\}< \varepsilon/6.$$

 We can choose $k$, large enough, such that $h_k\geq n_0$ and  $\Vert v-v_k\Vert < \varepsilon/6$. Fixing such a  $k$, we can choose $n_i(k)$, large enough, such that $\Vert T^{n_i(k)} x_0-x\Vert <\varepsilon/6$, $\Vert T^{n_i(k)-h_k} x_0-v_k\Vert< \varepsilon/6 $. Thus, we have
\begin{eqnarray*}
\Vert x-T^{h_k}v_k\Vert &\leq & \Vert  x-T^{n_i(k)}x_0\Vert+\Vert T^{n_i(k)}x_0-T^{h_k}v\Vert +\Vert T^{h_k}v -T^{h_k}v_k\Vert\\ &\leq & \varepsilon/6+\Vert T^{n_i(k)-h_k} x_0-v\Vert+\varepsilon/6 +\Vert v-v_k
\Vert +\varepsilon/6 \\ &\leq &
 \Vert T^{n_i(k)-h_k} x_0-v_k\Vert +2\Vert v -v_k\Vert +3\varepsilon/6 \\ &\leq &  \varepsilon.\end{eqnarray*}
Since $\varepsilon$ is arbitrary we have $T^{h_k} v_k=x$.
\end{proof}

\begin{lemma} \label{invariant} Let $C$ be a weakly compact convex subset of a Banach space $X$, $T:C\to C$ an asymptotically nonexpansive type mapping. Let $x_0$ be an arbitrary point in $C$. Then, for every $z\in \omega(x_0)$  we have $\omega (z)\subset  \omega(x_0)$. \end{lemma}
\begin{proof} Assume $\lim_k T^{n_k}x_0=z$ and  $\lim_i T^{n_i}z=y$. 
For an arbitrary $\varepsilon >0$,  there exists $n_0\in \N$  such that for all positive integer $n$ greater than or equal to $n_0$, we have
$$\sup\{\Vert T^n{z}-T^n{u}\Vert -\Vert z-u\Vert:u\in C\}< \varepsilon/3.$$

Choose $k_0$ such that $\Vert T^{n_k} x_0 -z\Vert <\varepsilon/3$ for each $k\geq k_0$ and for every $k\geq k_0$ choose $n_i(k) \geq n_0$ such that $n_k+n_i(k)> n_{k-1}+n_i(k-1)$ and  $\Vert T^{n_i(k)}z-y\Vert <\varepsilon/3$.

We have

\begin{eqnarray*}
	\Vert T^{n_k+n_i(k)} x_0-y\Vert &\leq & \Vert T^{n_k+n_i(k)} x_0-T^{n_i(k)}z\Vert+\Vert T^{n_i(k)}z-y\Vert\\ &\leq & \Vert T^{n_k} x_0-z\Vert+\varepsilon/3+\varepsilon/3<\varepsilon,
\end{eqnarray*}
	for every $k\geq k_0$  which implies that $\lim_k  T^{n_k+n_i(k)} x_0=y$ and $y\in \omega (x_0)$.
\end{proof}

The following lemma will play a key role in the proof of our main theorems.

\begin{lemma} \label{main} Let $C$ be a weakly compact convex subset of a Banach space $X$, $T:C\to C$ an asymptotically nonexpansive type mapping. Assume that there exists a closed convex nonempty subset $H$ of $C$ which satisfies

(i) For each $x\in H$, $\omega_w(x) \subset H$.

(ii) For each $x\in H$, every subsequence of  $\{T^n x:n\in \N\}$  has a further convergent subsequence.

Then there exists $z\in H$ such that $\{T^nz\}$ is norm convergent to $z$.
 \end{lemma}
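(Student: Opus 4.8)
The plan is the following. First I would use hypothesis (ii) to simplify (i): if $x\in H$ and some subsequence of $\{T^nx\}$ converges weakly to $y$, a further subsequence converges in norm, necessarily to $y$; hence $\omega_w(x)=\omega(x)$ for $x\in H$, so (i) becomes $\omega(x)\subseteq H$ for all $x\in H$, and each $\omega(x)$ is a nonempty norm-compact set. Running the Zorn's lemma construction that precedes Lemma~\ref{Xu}, but inside $H$ rather than $C$, I obtain a minimal nonempty closed convex set $K\subseteq H$ with $\omega(x)\subseteq K$ for every $x\in K$. Lemma~\ref{Xu} then supplies $\rho\ge 0$ with $\limsup_n\|T^nx-y\|=\rho$ for all $x,y\in K$. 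The lemma reduces to showing $\rho=0$: once this holds, any $z\in K$ satisfies $\limsup_n\|T^nz-z\|=\rho=0$, i.e. $T^nz\to z$. So I assume $\rho>0$ and look for a contradiction.

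The first substantive step is to pin down $K$. Set $S=\bigcup_{y\in K}\omega(y)$ and $K^{*}=\overline{\textrm{co}}\,(S)$. Since $K$ is closed, convex and $\omega$-invariant, $S\subseteq K$, hence $K^{*}\subseteq K$; and $K^{*}$ is itself $\omega$-invariant, because any $x\in K^{*}$ already lies in $K$, so $\omega(x)\subseteq S\subseteq K^{*}$. Minimality of $K$ forces $K^{*}=K$, i.e. $K=\overline{\textrm{co}}\,(\bigcup_{y\in K}\omega(y))$. Next I would check $\mathrm{diam}(S)\le\rho$: for $u\in\omega(y_1)$, $v\in\omega(y_2)$ with $y_1,y_2\in K$ and $\varepsilon>0$, pick $b$ large enough for the asymptotically nonexpansive type estimate at $y_2$ to hold with accuracy $\varepsilon$ from index $b$ on and for $\|T^{b}y_2-v\|<\varepsilon$, then $a>b$ large enough that $\|T^{a}y_1-u\|<\varepsilon$ and $\|T^{a-b}y_1-y_2\|<\rho+\varepsilon$ (possible since $\limsup_m\|T^{m}y_1-y_2\|=\rho$); writing $T^{a}y_1=T^{b}(T^{a-b}y_1)$ yields $\|T^{a}y_1-T^{b}y_2\|\le\|T^{a-b}y_1-y_2\|+\varepsilon<\rho+2\varepsilon$, hence $\|u-v\|<\rho+4\varepsilon$. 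Therefore $\mathrm{diam}(K)=\mathrm{diam}(S)\le\rho$. (Lemma~\ref{invariant}, the heredity $z\in\omega(x)\Rightarrow\omega(z)\subseteq\omega(x)$, and Lemma~\ref{image}, the synchronization of cluster points along orbits, are the clean tools behind such $\omega$-limit manipulations.)

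Now I claim every point of $K$ is diametral. Fix $x\in K$. By Lemma~\ref{Xu}, $\limsup_n\|T^{n}x-x\|=\rho$; since $\{T^{n}x\}$ is relatively norm-compact by (ii), a subsequence realizing this $\limsup$ can be thinned to one with $T^{n_i}x\to w$, and then $w\in\omega(x)\subseteq K$ with $\|w-x\|=\rho=\mathrm{diam}(K)$. Thus $\sup_{y\in K}\|x-y\|=\mathrm{diam}(K)$ for every $x\in K$, i.e. the Chebyshev radius of $K$ equals its diameter. For $\rho>0$ this contradicts the normal structure of $K$, which closes the argument and gives $\rho=0$.

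The hard part is exactly this last contradiction: excluding the possibility that the minimal invariant set $K$ is a positive-diameter, weakly compact convex set all of whose points are diametral. In the contexts where the lemma is applied this is automatic — $K$ is compact when $C$ is compact, or when $H$ is taken to be a norm-compact asymptotic center, and every bounded closed convex subset of a nearly uniformly convex space has normal structure — so in each case $K$ must be a singleton. The remaining pieces (the reduction via Lemma~\ref{Xu}, the identification $K=\overline{\textrm{co}}\,(\bigcup_{y\in K}\omega(y))$, the diameter estimate, and the extraction of a diametral point through compactness) are essentially careful bookkeeping with $\omega$-limit sets, powered by Lemmas~\ref{invariant} and~\ref{image}.
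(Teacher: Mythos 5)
Your reduction to showing $\rho=0$ (via the minimal $\omega_w$-invariant set $K\subseteq H$ and Lemma \ref{Xu}), the identification $K=\overline{\mathrm{co}}\,\bigl(\bigcup_{y\in K}\omega(y)\bigr)$, the estimate $\mathrm{diam}(K)\le\rho$, and the extraction via (ii) of a point $w\in\omega(x)\subseteq K$ with $\Vert w-x\Vert=\rho$ are all correct. But the decisive step --- ruling out a weakly compact convex $K$ of positive diameter all of whose points are diametral --- is exactly where your argument stops, and it cannot be closed by invoking ``the normal structure of $K$'': the lemma is stated for an arbitrary Banach space $X$, and weakly compact convex sets need not have normal structure (Alspach's weakly compact convex subset of $L^1[0,1]$ supporting a fixed-point-free isometry is the standard witness). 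Hypothesis (ii) only gives norm-compactness of each individual $\omega(x)$; it does not make $K$ compact, since $\bigcup_{y\in K}\omega(y)$ need not be relatively compact. Deferring this point to ``the contexts where the lemma is applied'' proves a weaker statement than the one asserted, so as a proof of Lemma \ref{main} there is a genuine gap, and it is the core of the lemma rather than bookkeeping.

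The paper closes precisely this gap by never asking $K$ to have normal structure. It fixes $x_0\in K$, passes to the compact set $S=\omega(x_0)$, and (using Lemma \ref{invariant} and Zorn) to a minimal closed $S_0\subseteq S$ with $\omega(z)\subseteq S_0$, so that $S_0=\omega(x_1)$. Normal structure is used only for this \emph{compact} set: if $\mathrm{diam}\,S_0>0$ one gets $z\in\overline{\mathrm{co}}(S_0)\subseteq K$ with $r=\sup_{x\in S_0}\Vert z-x\Vert<\mathrm{diam}\,S_0$, and then the set $D=\{x\in K:\sup_{y\in S_0}\Vert x-y\Vert\le r\}$ is a nonempty closed convex proper subset of $K$ which is shown to be $\omega_w$-invariant --- the key computation uses Lemma \ref{image} to write each $z\in S_0$ as $z=T^{n_i}v_i$ with $v_i\in S_0$ and then the asymptotically nonexpansive type condition at $x$. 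This contradicts the minimality of $K$, forces $S_0=\{z\}$, and (ii) upgrades subsequential convergence to $T^nz\to z$. So the contradiction comes from minimality of $K$ together with normal structure of compact sets only; if you want to salvage your route, you would need an argument of this $D$-type (or some other use of minimality), not normal structure of $K$ itself.
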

\begin{proof}

As stated above, we can find a closed convex nonempty subset $K$ of $H$ which satisfies $\omega_w(x) \subset K$ for every $x\in K$ and it is minimal under these conditions. Fix $x_0\in K$ and denote $S=\omega(x_0)$ which, by Lemma \ref{image},  is a nonempty compact set. By Lemma \ref{invariant} we have $\omega(z)\subset S$ for every $z\in S$. By applying Zorn's lemma, we can find a closed nonempty subset $S_0$ of $S$ which satisfies $\omega(z)\subset S_0$ for each $z\in S_0$ and it is minimal under these conditions. Note that  Lemma \ref{invariant} and  the minimality of $S_0$ imply that $S_0=\omega(x_1)$ for some (any) $x_1\in S_0$. We will prove that diam $S_0=0$. Indeed, since any compact set has normal structure, if diam $S_0 >0$, the Chebyshev radius of $S_0$ is smaller that the diameter of $S_0$. Thus, there exists a point $z$ in $\overline{\textrm{co}} (S_0)\subset K$  such that $r=\sup\{\Vert z-x\Vert : x\in S_0\} <$ diam $(S_0)$. Let
$$D=\{x\in K: \sup_{y\in S_0} \Vert x-y\Vert \leq r\}.$$
It is clear that $D$ is a nonempty closed convex proper subset of $K$. We will prove that $\omega_w(x) \subset D$ for every $x\in D$  contradicting the minimality of $K$. Indeed, assume that $y=w-\lim_i T^{n_i}x$ and $z\in S_0$. By Lemma \ref{image}, there exists an subsequence of $\{n_i\}$, denoted again $\{n_i\}$,  and a sequence $\{v_i\}$ in $\omega(x_1)=S_0$ such that $T^{n_i} v_i=z$. Hence,
\begin{eqnarray*}
\Vert z-y\Vert &\leq & \liminf_i \Vert z-T^{n_i} x\Vert  =  \liminf_i \Vert T^{n_i} v_i -T^{n_i} x\Vert \\ &\leq&   \limsup_i \Vert T^{n_i} v_i -T^{n_i} x\Vert.
\end{eqnarray*}
Since $T$ is an asymptotically nonexpansive type mapping and $\Vert v_i-x\Vert\leq r$ for every $i$ we have
$$\limsup_i \Vert T^{n_i} v_i -T^{n_i} x\Vert-r\leq \limsup_i \{\Vert T^{n_i} v_i -T^{n_i} x\Vert-\Vert v_i-x\Vert\}\leq 0,$$
which implies  that
$\Vert z-y\Vert\leq r.$
Thus,  $y\in D$ and  diam $S_0=0$. Write $S_0=\{z\}$. Since $\omega(z)\subset S_0$ we have that any convergent subsequence of $\{T^n z\}$ must converge to $z$. From condition (ii) the whole sequence $\{T^n z\}$ converges to $z$.

\end{proof}

\begin{remark}
It should be mentioned that this lemma is a significant  generalization of  Lemma 2 in \cite{Xu}. Indeed, firstly,  the continuity of an iterated $T^N$ for some $N\in \N$ is removed and, secondly, the same conclusion as  in Lemma \ref{main} is obtained  but for the mapping $T$ instead of the iterate $T^N$.

\end{remark}

\section{Fixed points for pointwise eventually asymptotically nonexpansive mappings}

We start this section with a very simple   and general result.

\begin{theorem}\label{cont}
	Let $X$ be an arbitrary  topological space, $M$ a nonempty subset of $X$ and $T$ a mapping from $M$ into $X$. Assume that there exists $x\in M$ such that $\displaystyle \lim_nT^n x= x$ and there exists $N\in \N$ such that $T^N$ is continuous at $x$. Then, $Tx=x$.
\end{theorem}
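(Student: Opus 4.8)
The plan is to use continuity of $T^{N}$ to upgrade the orbital convergence $\lim_{n}T^{n}x=x$ to the genuine periodicity relation $T^{N}x=x$, and then to read off $Tx=x$ from the fact that a periodic orbit which converges can only converge to each of its own points.

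First I would feed the convergence $T^{n}x\to x$ into the map $T^{N}$. Since $T^{N}$ is continuous at $x$, it is sequentially continuous at $x$, so $T^{N}(T^{n}x)\to T^{N}x$; that is, $T^{n+N}x\to T^{N}x$. But $\{T^{n+N}x\}_{n}$ is a subsequence of $\{T^{n}x\}_{n}$, hence it also converges to $x$. Comparing the two limits (here one uses that sequential limits are unique, which is automatic in the metric — more generally Hausdorff — setting) gives $T^{N}x=x$.

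Next I would iterate this identity: an easy induction yields $T^{kN}x=x$ for every $k\ge 1$, and consequently $T^{kN+1}x=T(T^{kN}x)=Tx$ for every $k\ge 1$. Hence $\{T^{kN+1}x\}_{k\ge 1}$ is the constant sequence equal to $Tx$; being a subsequence of $\{T^{n}x\}_{n}$ it converges to $x$, and a constant sequence converges only to its own value, so $Tx=x$, which is the conclusion.

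There is really no serious obstacle: the statement is elementary, and the only delicate point is the appeal to uniqueness of sequential limits (equivalently, that singletons are closed). Without any separation assumption the conclusion is genuinely false — think of the transposition of two points under the indiscrete topology — so ``topological space'' must be read as carrying at least this much, which is harmless since the theorem is only applied to Banach spaces. It is also worth flagging at the outset that the hypothesis $\lim_{n}T^{n}x=x$ already presupposes that the forward orbit $\{T^{n}x:n\ge 1\}$ is contained in $M$, so that every iterate $T^{n}x$ is defined.
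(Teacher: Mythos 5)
Your argument is correct and is essentially the paper's own proof: continuity of $T^{N}$ at $x$ applied to $T^{n}x\to x$ gives $T^{N}x=x$, and then the constant subsequence $T^{kN+1}x=Tx$ forces $Tx=x$. Your extra remark that uniqueness of limits (a Hausdorff-type hypothesis) is tacitly needed is a fair and harmless observation, since the theorem is only invoked in normed spaces.
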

\begin{proof}
Since $T^n x\to x$ and 	$T^N$ is continuous at $x$, we have $T^Nx=x$. Thus $T^{nN+1}x=Tx$ for all  $n\geq 1$,  which implies that   $Tx=x$.
\end{proof}

As we have before mentioned,  a Banach space such that $\epsilon_0(X)<1$  satisfies that any    asymptotically nonexpansive type mapping $T$ has a fixed point in a weakly compact convex $T$-invariant set   if the    continuity of some iterated of the mapping is assumed.  The same is true in the more general setting of a Banach space with uniform  normal structure \cite{GS}. Looking in detail at their proofs, we find that, in both cases,  it is proved  that there exists $x\in C$ such that  $\{T^nx\}$ converges to $x$. Therefore, we obtain  the following theorems  as an  immediately consequence of Theorem \ref{cont} together with Remark \ref{remark1}.

\begin{theorem}
Let $C$ be a convex bounded subset of a Banach space $X$, $T:C\to C$  an asymptotically nonexpansive type mapping. Assume that $\varepsilon_0(X)<1$. Then,  there exists $x\in C$ such that $T^n x\to x$.  In particular,  $T$ has a fixed point if it is pointwise asymptotically eventually nonexpansive.
\end{theorem}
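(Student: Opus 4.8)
The statement naturally splits into two parts, and the plan is to handle them separately. The first part --- that there is $x\in C$ with $T^nx\to x$ in norm --- is a statement about asymptotically nonexpansive type mappings that uses only the geometric hypothesis $\varepsilon_0(X)<1$; the second part --- the existence of a fixed point under the extra assumption --- then follows in one line. Indeed, if $T$ is pointwise eventually nonexpansive (or, more generally, pointwise asymptotically nonexpansive), then taking $N=N(x)$ in the defining inequality gives $\|T^Nx-T^Ny\|\le\|x-y\|$ (resp. $\le\alpha_N(x)\|x-y\|$) for all $y\in C$, so $T^N$ is continuous at $x$ --- this is precisely Remark \ref{remark1}. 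Applying Theorem \ref{cont} with this $x$ and this $N$ yields $Tx=x$. Hence everything reduces to producing the point $x$ with $T^nx\to x$, which, as the discussion preceding the theorem signals, should be extracted from the existing arguments.

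For this, I would first recall that $\varepsilon_0(X)<1$ forces $X$ to have uniform normal structure and, in particular, to be reflexive (see, e.g., \cite{GK2}); thus, taking $C$ closed --- as in the analogous results \cite{Ki1,GS} --- the set $C$ is weakly compact and convex, so the whole apparatus of Section 4 is available. Then I would rerun the argument already contained in the proof of Kirk's theorem \cite{Ki1} (and of its uniform normal structure refinement \cite{GS}): by Remark 1 of \cite{Xu}, recalled in the excerpt, there is a minimal closed convex nonempty $K\subseteq C$ with $\omega_w(x)\subseteq K$ for every $x\in K$, and Lemma \ref{Xu} then provides $\rho\ge 0$ with $\limsup_n\|T^nx-y\|=\rho$ for all $x,y\in K$. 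The key observation is that if $\rho=0$ we are already done: every $x\in K$ satisfies $\limsup_n\|T^nx-x\|=0$, i.e. $T^nx\to x$. Note that no continuity of any iterate has been used, or will be used, in this step --- continuity enters Kirk's original proof only at its final line, which is exactly what Theorem \ref{cont} has isolated.

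The heart of the matter, and the step I expect to be the main obstacle, is ruling out $\rho>0$. When $\rho>0$, Lemma \ref{Xu} tells us that for fixed $x_0\in K$ the orbit $\{T^nx_0\}$ has asymptotic radius $\rho$ relative to $K$ and asymptotic center $A(K,\{T^nx_0\})$ equal to all of $K$. One then uses a weak cluster point $u\in\omega_w(x_0)\subseteq K$ of the orbit, the weak lower semicontinuity of the norm, and the asymptotically nonexpansive type inequality (in the same spirit as the estimate carried out in the proof of Lemma \ref{main}, but applied to the possibly non-compact set $K$ itself, which is legitimate because normal structure here is \emph{uniform}) to compare the asymptotic diameter of the orbit with $\rho$ and reach a contradiction with $N(X)>1$. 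I would present this either by quoting the estimate from \cite{Ki1,GS} or by reproducing it; in either case it is routine given the tools already assembled, and --- crucially --- entirely continuity-free. Once $\rho=0$ is established the first part is proved, and the ``in particular'' clause follows from Theorem \ref{cont} together with Remark \ref{remark1} as explained above; one caveat worth recording is that, as in \cite{Ki1,GS}, $C$ should be taken closed so that its weak compactness (which holds by reflexivity) can be invoked.
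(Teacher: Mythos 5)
Your proposal is correct and follows essentially the same route as the paper: the paper likewise observes that the arguments in \cite{Ki1} and \cite{GS} already establish, without using continuity of any iterate, the existence of $x\in C$ with $T^nx\to x$, and then concludes the fixed point via Theorem \ref{cont} together with Remark \ref{remark1}. Your additional sketch of how to rerun Kirk's argument (minimal invariant set, Lemma \ref{Xu}, ruling out $\rho>0$) and the remark that $C$ should be closed so that $\varepsilon_0(X)<1$ yields weak compactness only make explicit what the paper leaves to the cited proofs.
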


\begin{theorem}
	 Let $C$ be a convex bounded subset of a Banach space $X$, $T:C\to C$  an asymptotically nonexpansive type mapping. Assume that  $X$ has uniform normal structure. Then,  there exists $x\in C$ such that $T^n x\to x$.  In particular, $T$ has a fixed point if it is pointwise  asymptotically eventually nonexpansive.
\end{theorem}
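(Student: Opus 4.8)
The plan is to mine the proof of the fixed point theorem of \cite{GS} recalled above for the stronger conclusion that, under uniform normal structure, \emph{every} asymptotically nonexpansive type mapping $T\colon C\to C$ — with no continuity hypothesis whatsoever — possesses a point $x$ with $T^nx\to x$ in norm; granting this, the ``in particular'' clause becomes immediate from Theorem \ref{cont} together with Remark \ref{remark1}. Since uniform normal structure forces $X$ to be reflexive, $C$ (which we take closed) is weakly compact, so the apparatus of Section 4 is available. Following Remark 1 of \cite{Xu}, recalled just before Lemma \ref{Xu}, I would fix a minimal closed convex nonempty $K\subseteq C$ with $\omega_w(x)\subseteq K$ for every $x\in K$, and let $\rho\ge 0$ be the constant furnished by Lemma \ref{Xu}, so that $\limsup_n\Vert T^nx-y\Vert=\rho$ for all $x,y\in K$. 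Everything then reduces to proving $\mathrm{diam}(K)=0$ and $\rho=0$: for then $K=\{z\}$, the inclusion $\omega_w(z)\subseteq K$ says $T^nz\rightharpoonup z$ weakly, and $\limsup_n\Vert T^nz-z\Vert=\rho=0$ upgrades this to $T^nz\to z$ in norm.

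One of the two required estimates is soft and mimics the end of the proof of Lemma \ref{main}. If $\rho<\mathrm{diam}(K)$, put $D=\{w\in K:\sup_{y\in K}\Vert w-y\Vert\le\rho\}$. Weak lower semicontinuity of the norm together with Lemma \ref{Xu} shows that every weak cluster point $u$ of an orbit $\{T^nx\}$ with $x\in K$ satisfies $\Vert u-y\Vert\le\rho$ for all $y\in K$; hence $\overline{\textrm{co}}\,(\omega_w(x))\subseteq D$, so $D$ is nonempty, visibly closed and convex, and \emph{proper} because $\rho<\mathrm{diam}(K)$. Moreover for $w\in D$ one has $\omega_w(w)\subseteq\overline{\textrm{co}}\,(\omega_w(w))\subseteq D$, so $D$ would contradict the minimality of $K$. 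Therefore $\mathrm{diam}(K)\le\rho$. The reverse estimate $\rho\le N(X)^{-1}\mathrm{diam}(K)$ is the genuine content, and it is precisely the Karlovitz–Lim–Kim–Xu computation underlying \cite{KX, GS}: using $N(X)>1$ (and hence also $WCS(X)>1$), one shows that the asymptotic radius $\rho$ of the orbits lying inside the minimal set $K$ cannot exceed $1/N(X)$ times $\mathrm{diam}(K)$. Combining, $\mathrm{diam}(K)\le\rho\le N(X)^{-1}\mathrm{diam}(K)$, and $N(X)>1$ forces $\mathrm{diam}(K)=0$, whence $\rho=0$ and a point $z$ with $T^nz\to z$.

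Finally, suppose in addition that $T$ is pointwise eventually nonexpansive (the pointwise asymptotically nonexpansive case is identical, using that $\alpha_{N(z)}(z)<\infty$). Then there is $N=N(z)$ with $\Vert T^Nz-T^Ny\Vert\le\Vert z-y\Vert$ for all $y\in C$, so $T^N$ is continuous at $z$ by Remark \ref{remark1}; Theorem \ref{cont}, applied with $M=C$ and this $z$, yields $Tz=z$. I expect the only genuinely delicate point to be the reverse estimate $\rho\le N(X)^{-1}\mathrm{diam}(K)$: in the compact situation of Lemma \ref{main} plain normal structure of the compact set $S_0$ sufficed, whereas here a quantitative normal-structure argument is unavoidable, and the whole improvement over \cite{GS} hinges on checking that this particular argument — unlike the rest of their proof — never invokes continuity of an iterate of $T$.
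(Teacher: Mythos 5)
Your high-level plan coincides with the paper's: the paper proves this theorem simply by observing that the arguments of \cite{Ki1} and \cite{GS}, read carefully, already establish the existence of a point $x$ with $T^nx\to x$ \emph{before} any continuity hypothesis is invoked, and then concludes via Theorem \ref{cont} together with Remark \ref{remark1}; your final paragraph (continuity of $T^{N(z)}$ at $z$, then Theorem \ref{cont}) is exactly this and is correct.

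The gap is in how you propose to obtain the norm-convergent orbit. The inequality $\rho\le N(X)^{-1}\,\mathrm{diam}(K)$ is not ``the computation underlying \cite{KX,GS}'': those proofs never use Xu's minimal set $K$ at all. They iterate asymptotic centers of successive orbits, $x_{m+1}\in A(C,\{T^nx_m\})$, and use a sequential consequence of $N(X)>1$ which bounds the asymptotic radius of a bounded sequence by a fixed fraction of the asymptotic diameter of \emph{that same sequence}; this makes $\{x_m\}$ Cauchy and its limit has norm-convergent orbit. A bound of $\rho$ (the common asymptotic radius on $K$ furnished by Lemma \ref{Xu}) by a fraction of $\mathrm{diam}(K)$ is a different, unproven statement, and your framework gives no access to it: $K$ is invariant only under taking weak cluster points, not under $T$, so for $x\in K$ the orbit $\{T^nx\}$ need not lie in $K$. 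The asymptotically nonexpansive type condition only yields $\Vert T^mx-T^nx\Vert\le\Vert x-T^{m-n}x\Vert+o(1)$ as $n\to\infty$, so the asymptotic diameter of the orbit is controlled by $\sup_j\Vert x-T^jx\Vert$ (possibly of the order of $\mathrm{diam}(C)$), not by $\mathrm{diam}(K)$; and $N(X)$ compares a set's Chebyshev radius with its \emph{own} diameter, which is useless once the orbit leaves $K$. Your soft half, $\mathrm{diam}(K)\le\rho$, is fine, but note that if both of your inequalities held, the theorem would follow from Lemma \ref{Xu} --- which uses no geometry of $X$ whatsoever --- plus one line; that is itself a warning sign. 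To repair the argument you must either reproduce the actual iteration of \cite{KX,GS} and verify, as the paper does, that continuity enters only at the very last step, or supply a genuinely new proof that $\rho=0$; the minimal-$K$ route is the one the paper uses for nearly uniform convexity (Theorem \ref{NUC}), where the modulus $\Delta_\alpha$ can be applied directly to separated subsequences of an orbit, but it does not transplant to uniform normal structure in the way you suggest.
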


It should be noted that the above theorem recovers and extends Theorem 3.4 in  \cite{RR}.

\bigskip

Our next fixed point theorems are a direct  consequence of Lemma \ref{main}. When $C$ is a compact set, the assumptions of Lemma \ref{main} are immediately satisfied for $H=C$. Therefore, Theorem \ref{cont} leads to the following result.

\begin{theorem}\label{compact}
	Let $C$ a nonempty, compact, convex subset of a Banach space and $T:C\to C$ an asymptotically nonexpansive type mapping. Assume that for each $x\in C$ the mapping  $T^{N(x)}$ is continuous at $x$ for some $N(x)\in \N$. Then $T$ has a fixed point.
\end{theorem}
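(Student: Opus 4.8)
The plan is to verify the hypotheses of Lemma \ref{main} with $H = C$ and then invoke Theorem \ref{cont}. Since $C$ is compact, condition (ii) of Lemma \ref{main} is automatic: every sequence in $C$, hence every subsequence of $\{T^n x\}$, has a norm-convergent subsequence. Condition (i) is also automatic, because $\omega_w(x) \subset C$ trivially (all iterates $T^n x$ lie in $C$ and $C$ is closed, so even the norm cluster set is inside $C$, a fortiori the weak one). Thus $H = C$ is a closed convex nonempty subset of $C$ satisfying (i) and (ii).

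Applying Lemma \ref{main}, I obtain a point $z \in C$ such that $\{T^n z\}$ converges in norm to $z$. Now I bring in the continuity hypothesis: by assumption there is $N(z) \in \N$ such that $T^{N(z)}$ is continuous at $z$. Then Theorem \ref{cont} applies directly with $X = X$ (a topological space), $M = C$, $x = z$, and $N = N(z)$: the hypothesis $\lim_n T^n z = z$ holds, $T^{N(z)}$ is continuous at $z$, so $T z = z$. This completes the proof.

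The only point requiring a word of care is the continuity assumption: Theorem \ref{cont} needs continuity of $T^N$ at the specific point $z$ produced by Lemma \ref{main}, not at an arbitrary point, but the hypothesis of the present theorem is precisely that $T^{N(x)}$ is continuous at $x$ for every $x \in C$, so in particular at $z$. (This is also the natural hypothesis suggested by Remark \ref{remark1}, which observes that for a pointwise eventually nonexpansive mapping, $T^{N(x)}$ is automatically continuous at $x$; hence the corollary that every pointwise eventually nonexpansive self-map of a compact convex set has a fixed point follows as a special case.) I do not anticipate a genuine obstacle here — the result is, as the authors note, a direct consequence of Lemma \ref{main} and Theorem \ref{cont}; the entire content of the argument has already been front-loaded into Lemma \ref{main}.
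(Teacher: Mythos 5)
Your proof is correct and follows exactly the paper's argument: the authors likewise note that with $H=C$ the hypotheses of Lemma \ref{main} are immediate from compactness, and then invoke Theorem \ref{cont} at the limit point $z$ using the continuity of $T^{N(z)}$ at $z$. Nothing is missing.
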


 In particular, the above theorem  extends \cite[Theorem 4]{ Ki1}, removing the continuity assumption for an iterated $T^N$. As a consequence we have:

\begin{corollary}
	Let $C$ a nonempty, compact, convex subset of a Banach space and $T:C\to C$ a  pointwise asymptotically nonexpansive mapping.  Then $T$ has a fixed point.
\end{corollary}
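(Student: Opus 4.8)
The plan is to obtain the corollary as an immediate consequence of Theorem \ref{compact}, after checking its hypotheses. First, since $C$ is compact it is in particular bounded, so by the Remark following the definition of pointwise asymptotically nonexpansive mappings, $T$ is an asymptotically nonexpansive type mapping; this is the standing hypothesis of Theorem \ref{compact}. It then remains only to verify the pointwise continuity condition appearing there.

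So I would fix $x\in C$ and invoke the definition of pointwise asymptotic nonexpansiveness: there are $N(x)\in\mathbb{N}$ and a real sequence $(\alpha_n(x))$ with $\alpha_n(x)\to 1$ such that $\Vert T^n x-T^n y\Vert\leq \alpha_n(x)\Vert x-y\Vert$ for all $y\in C$ and all $n\geq N(x)$. Specializing to $n=N(x)$ gives $\Vert T^{N(x)}x-T^{N(x)}y\Vert\leq \alpha_{N(x)}(x)\Vert x-y\Vert$ for every $y\in C$; since $\alpha_{N(x)}(x)$ is a finite real number, this says exactly that $T^{N(x)}$ is Lipschitz at $x$, hence continuous at $x$. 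As $x$ was arbitrary, the hypothesis of Theorem \ref{compact} is met, and that theorem supplies a fixed point of $T$.

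Equivalently — and this is how the underlying machinery runs — one applies Lemma \ref{main} with $H=C$: condition (i) is automatic, and condition (ii) holds because $C$ is compact, so there is $z\in C$ with $T^n z\to z$; then the continuity of $T^{N(z)}$ at $z$, established as above, allows Theorem \ref{cont} to conclude $Tz=z$. I do not foresee any genuine obstacle here: the only subtlety worth stating is that the pointwise condition yields merely continuity of $T^{N(x)}$ at the single point $x$, not global continuity of a fixed iterate, but Theorem \ref{compact} was formulated precisely to tolerate this, so nothing further is required.
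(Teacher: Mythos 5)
Your proposal is correct and follows the paper's own route: the corollary is stated there as an immediate consequence of Theorem \ref{compact}, using exactly the two observations you make (boundedness of the compact set $C$ makes $T$ of asymptotically nonexpansive type via the Remark, and specializing the pointwise estimate to $n=N(x)$ gives continuity of $T^{N(x)}$ at $x$). Nothing further is needed.
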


 Having in mind Schauder's Fixed Point Theorem, it is interesting to note that we can obtain a fixed point for a discontinuous mapping defined on a convex closed bounded set.

 \medskip

The following theorem solves a  long-standing open question (see \cite[Question 2]{KX} and \cite[Question 2]{Ki3}).

\begin{theorem}
Let $C$ be a weakly compact convex Banach space $X$ and $T:C\to C$  an pointwise eventually  nonexpansive  mapping. Suppose that each sequence in $C$ has  compact asymptotic center relative to $C$. Then $T$ has a fixed point.
\end{theorem}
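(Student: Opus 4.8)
Since $C$ is weakly compact it is bounded, and therefore $T$ is an asymptotically nonexpansive type mapping: a pointwise eventually nonexpansive map is pointwise asymptotically nonexpansive (take $\alpha_n(x)\equiv 1$), and on a bounded set this implies the asymptotically nonexpansive type condition. Hence Lemmas \ref{Xu}, \ref{image}, \ref{invariant} and \ref{main} are all available. The plan is to produce a nonempty closed convex $H\subseteq C$ meeting hypotheses (i) and (ii) of Lemma \ref{main}; then that lemma gives $z\in H$ with $T^nz\to z$, Remark \ref{remark1} shows $T^{N(z)}$ is continuous at $z$, and Theorem \ref{cont} yields $Tz=z$. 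So the whole theorem reduces to the construction of $H$, and it is here that the hypothesis on asymptotic centers has to be used.

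I would take $H$ to be the asymptotic center of an orbit. By the Zorn's-lemma argument recalled just before Lemma \ref{Xu}, fix a minimal closed convex nonempty $K\subseteq C$ with $\omega_w(x)\subseteq K$ for all $x\in K$, choose $x_0\in K$, and let $\rho\ge 0$ be the constant of Lemma \ref{Xu}, so $\limsup_n\|T^nx-y\|=\rho$ for all $x,y\in K$. Set $f(y)=\limsup_n\|T^nx_0-y\|$; this is a $1$-Lipschitz convex (hence weakly lower semicontinuous) function on $C$. Let $r=\inf_Cf=r(C,\{T^nx_0\})$ and $H=A(C,\{T^nx_0\})=\{y\in C:f(y)=r\}$: by hypothesis $H$ is norm compact, and it is nonempty, closed and convex. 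The key point is that the orbit of each $x\in H$ is a minimizing sequence for $f$: given $\varepsilon>0$, the asymptotically nonexpansive type condition provides $l_0$ with $\|T^lx-T^lu\|\le\|x-u\|+\varepsilon$ for every $l\ge l_0$ and $u\in C$; taking $u=T^{m-l}x_0$ with $m\ge l$ gives $\|T^lx-T^mx_0\|\le\|x-T^{m-l}x_0\|+\varepsilon$, hence $f(T^lx)\le f(x)+\varepsilon=r+\varepsilon$, and so $f(T^lx)\to r$. From this, (i) is immediate: if $y$ is a weak cluster point of $\{T^nx\}$ with $x\in H$, then $f(y)\le\liminf_j f(T^{n_j}x)=r$ by weak lower semicontinuity, so $y\in H$, i.e. $\omega_w(x)\subseteq H$.

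It remains to verify (ii), that the orbit $\{T^nx\}$ is norm precompact for every $x\in H$; this is the main obstacle, and where compactness of all asymptotic centers relative to $C$ should be decisive. Since $\{T^nx\}$ minimizes $f$ and the minimizer set $H$ is norm compact, it suffices to prove that any minimizing sequence for such an $f$ is norm precompact. I would argue by contradiction: if $\{T^{n_k}x\}$ contained a $d$-separated subsequence, then (since $d$-separation forces the asymptotic radius of that subsequence to be at least $d/2$) its asymptotic center $B=A(C,\{T^{n_k}x\})$ would be a compact set of positive radius; combining the bounds coming from $f(T^{n_k}x)\to r$ — an upper estimate $f(z)\le r+\varepsilon_k+\|T^{n_k}x-z\|$ valid for all $z\in C$ and its lower counterpart — with the constancy of $f$ on $K$ and the minimality of $K$ should contradict the $d$-separation. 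This is the only delicate part: it may in particular require comparing $r$ with $\rho$, and transporting, via Lemmas \ref{image} and \ref{invariant}, the compactness already available at $\omega(x_0)$ to the orbit itself. Once (i) and (ii) are established, Lemma \ref{main} applies and the proof closes as indicated in the first paragraph.
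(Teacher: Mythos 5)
Your overall strategy is the paper's: take $H=A(C,\{T^nx_0\})$, check hypotheses (i)--(ii) of Lemma \ref{main}, and finish with Remark \ref{remark1} and Theorem \ref{cont}. The reduction in your first paragraph and the verification of (i) via weak lower semicontinuity of $f(y)=\limsup_n\Vert T^nx_0-y\Vert$ are fine (the detour through the minimal set $K$ and Lemma \ref{Xu} is unnecessary, but harmless). The genuine gap is condition (ii). What you actually prove is only that the orbit $\{T^lx\}$ of a point $x\in H$ is a \emph{minimizing sequence} for $f$, because you weakened the hypothesis to the asymptotically nonexpansive type condition and therefore only get $f(T^lx)\le r+\varepsilon$ for large $l$. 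Compactness of the minimizer set $H$ does not imply that minimizing sequences are norm precompact, and your sketched contradiction argument (a $d$-separated subsequence, its asymptotic center, a comparison of $r$ with $\rho$, and an unspecified transport of compactness via Lemmas \ref{image} and \ref{invariant}) is not carried out; as written, (ii) is simply not established, and you acknowledge this yourself.

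The missing idea is to use the full strength of the actual hypothesis: $T$ is pointwise \emph{eventually} nonexpansive, so the inequality holds with constant $1$ and no $\varepsilon$. For $x\in H$ and $m\ge N(x)$ one has $\Vert T^mx-T^{n+m}x_0\Vert\le\Vert x-T^nx_0\Vert$ for every $n$, hence $\limsup_n\Vert T^nx_0-T^mx\Vert\le f(x)=r$, i.e.\ $T^mx\in H$ for \emph{all} $m\ge N(x)$, not merely $f(T^mx)\le r+\varepsilon$. Since $H$ is compact by hypothesis, (ii) is then immediate (the tail of the orbit lies in a compact set), and (i) follows at once as well because $H$ is closed and convex, hence weakly closed. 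This is exactly the paper's short argument; with it, no comparison of $r$ and $\rho$, no minimal $K$, and no separate precompactness lemma are needed. If you insist on working only with the asymptotically nonexpansive type condition, the theorem as you are trying to prove it is not what the statement asserts, and your proof of (ii) would have to be supplied in full rather than gestured at.
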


\begin{proof}
	Choose an arbitrary point $x_0\in C$ and let $A=A(C,\{T^nx_0\})$. It is easy to check that for each $y\in A$,  $T^my\in A$ for all large $m$. Therefore, the weak (in fact, strong) cluster point set of the  $\{T^my\}$ is a subset of $A$. Hence $A$ satisfies condition (i) in Lemma \ref{main}. Since $A$ is compact, it also meets  condition (ii). These facts together with Theorem \ref{cont} lead to the desired result.

\end{proof}

The main result in this section extends Xu's result in \cite{Xu} and gives a positive answer to another  question raised in \cite{KX, Ki3}.

\begin{theorem}\label{NUC} Let $X$ be a nearly uniformly convex Banach space, $C$ a closed convex bounded nonempty subset of $X$, $T:C\to C$ an asymptotically nonexpansive type mapping and  $K$ a closed convex nonempty subset of $C$ such that the cluster point set $\omega_w(x)$ of the sequence $\{T^nx\}$ is contained in $K$ for every $x\in K$ and the set $K$ is minimal under these conditions. Then, $K$ is a singleton $\{x_0\}$ and  $\{T^nx_0\}$ is norm convergent to $x_0$. In particular, every pointwise  asymptotically  nonexpansive mapping $T:C\to C$ has a fixed point.\end{theorem}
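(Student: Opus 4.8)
The plan is to deduce the theorem from Lemma~\ref{main} together with the minimality of $K$, the one substantial point being the verification of the compactness hypothesis~(ii) of Lemma~\ref{main} for $H=K$; this is where near uniform convexity is used. First I record that a nearly uniformly convex space is reflexive, so the bounded closed convex set $C$ is weakly compact. In particular Lemma~\ref{Xu} applies and provides $\rho\ge 0$ with $\limsup_n\|T^nx-y\|=\rho$ for all $x,y\in K$, and the Zorn-type construction carried out just before Lemma~\ref{Xu} (which needs the weak compactness of $C$) guarantees that minimal sets exist, as will be needed for the final assertion. Hypothesis~(i) of Lemma~\ref{main} for $H=K$ is exactly the defining property of $K$. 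Granting~(ii), that for every $x\in K$ each subsequence of $\{T^nx\}$ has a norm-convergent further subsequence, Lemma~\ref{main} produces $z\in K$ with $T^nz\to z$; then $\{z\}$ is a closed convex nonempty subset of $C$ with $\omega_w(z)=\{z\}$, so the minimality of $K$ forces $K=\{z\}$, which is the first assertion, and $T^nx_0\to x_0$ with $x_0=z$.

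Thus the heart of the proof is~(ii). If $\rho=0$ then $\limsup_n\|T^nx-x\|=0$ for every $x\in K$ and (ii) is immediate, so assume $\rho>0$ and suppose, aiming at a contradiction, that some orbit $\{T^nx_0\}$ with $x_0\in K$ fails to be relatively norm compact. Then $D:=\alpha(\{T^nx_0:n\in\N\})>0$, and the same value is attained by every tail $\{T^nx_0:n\ge k\}$ and by its closed convex hull $B_k:=\overline{\textrm{co}}\{T^nx_0:n\ge k\}$, since discarding finitely many points and passing to closed convex hulls leaves the Kuratowski measure unchanged. Fixing $y\in K$, Lemma~\ref{Xu} gives $\rho_k:=\sup_{n\ge k}\|T^nx_0-y\|\downarrow\rho$, so $B_k$ lies in the closed ball of radius $\rho_k$ about $y$ and $(B_k-y)/\rho_k$ is a convex subset of the unit ball whose measure of noncompactness is bounded below by a positive constant independent of $k$ and of $y$. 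The definition of $\Delta_\alpha$ and the hypothesis $\varepsilon_\alpha(X)=0$ then yield a single $\delta>0$ with $d(y,B_k)\le(1-\delta)\rho_k$; passing to weak cluster points of nearly closest points and using weak lower semicontinuity of the norm, one obtains a nonempty closed convex set $B_\infty:=\bigcap_k B_k\subset C$ with $d(y,B_\infty)\le(1-\delta)\rho$ for every $y\in K$, while $B_\infty\supset\omega_w(x_0)$. A separate and easy minimality argument also shows $\operatorname{diam}K\le\rho$: the set $\{y\in K:\sup_{z\in K}\|y-z\|\le\rho\}$ is closed, convex, nonempty (it contains $\omega_w(x_0)$) and satisfies $\omega_w(y)\subset\{y:\ \ldots\}$ for $y$ in it (by Lemma~\ref{Xu}), hence equals $K$.

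From here the remaining task, which I expect to be the main obstacle, is to convert the estimate $d(\cdot,B_\infty)\le(1-\delta)\rho$ on $K$, together with $\operatorname{diam}K\le\rho$, into a proper closed convex nonempty subset $D'$ of $K$ with $\omega_w(x)\subset D'$ for every $x\in D'$, contradicting minimality; I expect this to run parallel to the proof of Lemma~2 of \cite{Xu}. The mechanism I have in mind is to locate, using the scaled estimate, a point $c\in\overline{\textrm{co}}(K)=K$ lying within some $(1-\delta')\rho<\rho$ of all of $K$, and then to show that $D':=\{y\in K:\sup_{z\in K}\|y-z\|\le(1-\delta')\rho\}$, which is nonempty (it contains $c$) and proper (it omits any point realizing the distance $\rho$), is $\omega_w$-invariant, by tracking orbits with the asymptotically nonexpansive type inequality in the style of the proof of Lemma~\ref{main}; the delicate point is carrying this out without presupposing the orbit compactness we are trying to establish. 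This contradiction forces $D=0$, so~(ii) holds and, as explained above, $K=\{x_0\}$ with $T^nx_0\to x_0$.

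Finally, for the ``in particular'': if $T:C\to C$ is pointwise asymptotically nonexpansive then, $C$ being bounded, it is of asymptotically nonexpansive type (the Remark following that definition), and, $X$ being reflexive, $C$ is weakly compact and a minimal set $K$ exists; the first part gives $K=\{x_0\}$ with $T^nx_0\to x_0$. Since there is $N(x_0)\in\N$ with $\|T^{N(x_0)}x_0-T^{N(x_0)}y\|\le\alpha_{N(x_0)}(x_0)\|x_0-y\|$ for all $y\in C$, the iterate $T^{N(x_0)}$ is continuous at $x_0$, so Theorem~\ref{cont} gives $Tx_0=x_0$.
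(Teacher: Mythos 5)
Your overall architecture (reduce to hypothesis (ii) of Lemma \ref{main} for $H=K$, then get $K=\{z\}$ by minimality and finish the ``in particular'' via Theorem \ref{cont}) matches the paper, and your first and last paragraphs are fine. The genuine gap is exactly where you flag it: the verification of (ii) is never completed. The estimates you do establish --- $d(y,B_\infty)\le(1-\delta)\rho$ for all $y\in K$ and $\operatorname{diam}K\le\rho$ --- do not by themselves produce a contradiction, and the mechanism you sketch is circular as you yourself note: to show that a smaller Chebyshev-type set $D'=\{y\in K:\sup_{z\in K}\|y-z\|\le(1-\delta')\rho\}$ is $\omega_w$-invariant one needs, as in the proof of Lemma \ref{main}, the device of Lemma \ref{image} (writing $z=T^{n_i}v_i$ with $v_i$ in a norm-compact $\omega$-set), and that device presupposes precisely the relative norm compactness of orbits you are trying to prove; Xu closes the analogous step with continuity of an iterate, which is unavailable here. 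There is also a geometric slip in the sketch: $B_\infty=\bigcap_k\overline{\mathrm{co}}\{T^nx_0:n\ge k\}$ need not lie in $K$ (the set $K$ is $\omega_w$-invariant, not $T$-invariant), so the stated estimates do not locate a single point $c\in K$ uniformly within $(1-\delta')\rho$ of all of $K$.

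The paper closes this step without any further minimality argument, by a direct, localized contradiction with Lemma \ref{Xu}: if some orbit $\{T^nx\}$, $x\in K$, is not relatively norm compact, pick a $d$-separated subsequence $\{T^{n_i}x\}$ and (by weak compactness) assume $T^{n_i}x\rightharpoonup x_\infty$; then $x_\infty\in\omega_w(x)\subset K$, so Lemma \ref{Xu} forces $\limsup_n\|T^nx-x_\infty\|=\rho$. On the other hand, for each fixed large $n$ the asymptotically-nonexpansive-type inequality gives $\|T^nx-T^{n_i}x\|=\|T^nx-T^nT^{n_i-n}x\|\le\rho+\varepsilon$ for all large $i$, and applying the modulus $\Delta_\alpha$ to the $d$-separated set $\{T^{n_i}x-T^nx\}_i$ together with weak lower semicontinuity of the norm yields $\|T^nx-x_\infty\|\le(\rho+\varepsilon)\bigl(1-\Delta_\alpha\bigl(d/(\rho+\varepsilon)\bigr)\bigr)$ for all large $n$; letting $\varepsilon\to 0$ gives $\limsup_n\|T^nx-x_\infty\|<\rho$, a contradiction. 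In other words, the modulus should be applied ``from the orbit points toward the weak limit $x_\infty$'' (whose asymptotic distance to the orbit is pinned at $\rho$ by Lemma \ref{Xu}), rather than from points $y\in K$ toward the convex hulls of orbit tails; with your current formulation the crucial conversion step remains unproved, so the proposal as written does not establish the theorem.
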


\begin{proof} 	From  Lemma \ref{Xu} it suffices to prove that $\rho=0$.
	Assume that $\rho>0$. By construction,  $K$ satisfies (i)  in Lemma \ref{main}.  We will prove that it also satisfies (ii). Otherwise,  assume that there are a point $x\in K$ and  a subsequence  of $\{T^n x\}$ which does no admit a further convergent subsequence. In this case, there exist a number $d >0$ and a $d$-separated subsequence $\{T^{n_i}x\}$ of $\{T^n x\}$. For an arbitrary $\varepsilon >0$  there exists $n_0\in \N$,  such that for all positive integer $n$ greater than or equal to $n_0$, we have
	$$\Vert T^nx-x\Vert <\rho+\varepsilon/2$$
	and $$\sup\{\Vert T^n{x}-T^n{u}\Vert -\Vert x-u\Vert:u\in C\}< \varepsilon/2.$$
	 Fix such an $n\in \mathbb{N}$.  Since the sequence $\{T^{n_i}x-T^n x\}_i$ is $d$-separated we have $\alpha(\{T^{n_i}x-T^n x\}_i)\geq d$. Taking a subsequence, we can assume that $\{T^{n_i}x\}$ converges weakly, say to $x_\infty\in K$. On the other hand,  there exists  $i_0$  large enough such  that $n_i\geq n+n_0$ for $i\geq i_0$. Thus, for  $i\geq i_0$, we have
	 $$
	 \Vert T^nx-T^{n_i}x\Vert=\Vert T^nx-T^nT^{n_i-n}x\Vert \leq  \varepsilon/2+\Vert x-T^{n_i-n}x\Vert\leq r+\varepsilon.
	 $$	
 Hence, by the lower weakly semicontinuity of the norm,
$$\frac{\Vert T^nx-x_\infty\Vert }{ r+\varepsilon} \leq 1-\Delta_{\alpha} \left( \frac {d}{r+\varepsilon}\right )$$
which implies  $r=\limsup_n \Vert T^n x-x_\infty\Vert \leq (r+\varepsilon)(1-\Delta_{\alpha} (d/(r+\varepsilon)))$. Letting $\varepsilon \to 0$ we obtain the contradiction $r\leq r(1-\Delta_{\alpha} ({d/r}^-)) <r$. Thus, $K$ is a singleton $\{x_0\}$ and satisfies (i) and (ii). Hence  $\{T^nx_0\}$ is norm convergent to $x_0$. By Theorem  \ref{cont} and Lemma \ref{Xu},  every pointwise asymptotically nonexpansive mapping $T:C\to C$ has a fixed point.
\end{proof}
\bigskip
\begin{remark} Butsan et al. \cite[Corollary 3.18]{BT} claim that they have proved Theorem \ref{NUC} solving the mentioned question.  However, the proof is strongly based on certain connection between the asymptotic center of a sequence and the modulus of noncompact convexity which are stated in   \cite{GK2}. Actually, they assert that in a nearly uniformly convex space $X$,  there exists a $\lambda\in [0,1)$ such that for each closed bounded convex subset $C$ of $X$ and for each sequence $\{x_n\}\subset C$ the following inequality  holds
	$$
	r(C,\{y_n\})\leq \lambda r(C,\{x_n\}),
	$$
for each sequence $\{y_n\}$ in $A(C,\{x_n\})$.

 However, it should  be emphasized that this inequality is not true unless the regularity of the sequence $\{x_n\}$  is assumed (see \cite[Remark 3.5]{DL}), as the following example shows. Since the proof in \cite{BT} does not allow to take a subsequence, Corollary 3.15 does not follow from Theorem 3.14 in \cite{BT} and the question   still  remained open. For completeness, we  include the counterexample of \cite{DL}.

\begin{example}
	Consider the product space $X=Y\bigotimes\ell_2$ where $Y=(\R^2,\Vert \cdot \Vert_{\infty})$ with the norm
	$$
	\Vert (x,y)\Vert=(\Vert x\Vert_{\infty}^2+\Vert y\Vert_2^2)^{\frac{1}{2}},\,x\in Y, y\in \ell_2.
	$$
	As it is proved in \cite{DL}
	$$
	\Delta_\alpha(\varepsilon)=1-\sqrt{1-\frac{\varepsilon^2}{4}},
	$$
	following that $X$ is a nearly uniform convex space.
	If $z_n\in \R^2$ is the sequence defined by $z_{2n-1}=(-1,0)$ and $z_{2n}=(1,0)$ for each $n\in \N$, we consider the sequence $x_n=(z_n,0)\in X$. Denote $B$ the unit ball of $Y$ and let $C=B\times \{0\}$. Clearly $C$ is a weakly compact (and compact) convex subset of $X$ which contains $\{x_n\}$. It is not difficult to see that $r(C,\{x_n\})=1$ and $A(C,\{x_n\})=\{((0,y),0):y\in[-1,1]\}$. Define the sequence $u_n\in \R^2$ by $u_{2n-1}=(0,-1)$ and $u_{2n}=(0,1)$ for each $n\in \N$ and $y_n=(u_n,0)$. Then $y_n\in A(C,\{x_n\})$ and
	$r(C,\{y_n\})=r(C,\{x_n\})=1$.
\end{example}

\end{remark}

\section{ OPEN QUESTIONS}
Although our results solve several long-standing open problems, as far as we know, the following questions are still open:

\begin{questions}
Does a pointwise eventually nonexpansive mapping,  defined from a weakly compact convex subset $C$ into $C$, have  a fixed point if
$X$ satisfies one of the following conditions

\begin{itemize}

\item(a) $\epsilon_\alpha(X)<1$ where $\epsilon_\alpha(X)$ is the characteristic of noncompact convexity for the Kuratowski measure of noncompactness.

\item(b) $X$ has uniform weak normal structure (i.e $ WCS(X)>1$).

\item (c) $X$ has normal structure.

\item (d) $X$ has the FPP for nonexpansive mappings?

\end{itemize}

\end{questions}

\smallskip
\end{document}